\DeclareMathOperator{\depth}{depth}
\DeclareMathOperator{\sdep}{sdepth}
\DeclareMathOperator{\hdep}{hdepth}
\DeclareMathOperator{\supp}{supp}
\DeclareMathOperator{\lin}{span}
\DeclareMathOperator{\Ann}{Ann}
\DeclareMathOperator{\Syz}{Syz}
\let\Dirsum=\bigoplus
\let\iso=\cong
\newcommand{\cC}{\mathcal{C}}
\newcommand{\mm}{\mathfrak{m}}
\newcommand{\ZZ}{\mathds{Z}}
\newcommand{\NN}{\mathds{N}}
\newcommand{\KK}{\mathds{K}}
\newcommand{\ab}{\mathbf{a}}
\newcommand{\bb}{\mathbf{b}}
\newcommand{\cb}{\mathbf{c}}
\newcommand{\db}{\mathbf{d}}
\newcommand{\Sb}{\mathbf{s}} 
\newcommand{\gb}{\mathbf{g}}
\newcommand{\eb}{\mathbf{e}}
\newcommand{\nb}{\mathbf{0}}
\newcommand{\Xb}{\mathbf{X}} 
\newcommand{\set}[1]{\{#1\}}
\newcommand{\with}{\,:\,}
\newcommand{\qq}[1]{``#1''}
\newtheorem{coro}[algocf]{Corollary}
\newtheorem{theo}[algocf]{Theorem}
\newtheorem{propo}[algocf]{Proposition}
\newtheorem{conjecture}[algocf]{Conjecture}
\theoremstyle{definition}
\newtheorem{defi}[algocf]{Definition}
\newtheorem{rema}[algocf]{Remark}
\newtheorem{ex}[algocf]{Example}
\newtheorem{question}[algocf]{Question}
\newtheorem{construction}[algocf]{Construction}
\numberwithin{algocf}{section}
\begin{document}

\title{How to compute the Stanley depth of a module}

\author{Bogdan Ichim}

\address{Simion Stoilow Institute of Mathematics of the Romanian Academy, Research Unit 5, C.P. 1-764,
014700 Bucharest, Romania} \email{bogdan.ichim@imar.ro}

\author{Lukas Katth\"an}

\address{Universit\"at Osnabr\"uck, FB Mathematik/Informatik, 49069
Osnabr\"uck, Germany}\email{lukas.katthaen@uos.de}

\author{Julio Jos\'e Moyano-Fern\'andez}

\address{Universitat Jaume I, Campus de Riu Sec, Departamento de Matem\'aticas \& Institut Universitari de Matem\`atiques i Aplicacions de Castell\'o, 12071
Caste\-ll\'on de la Plana, Spain} \email{moyano@uji.es}

\subjclass[2010]{Primary: 05A18; 05E40; Secondary: 16W50.}
\keywords{Graded modules; Hilbert depth; Stanley depth; Stanley decomposition.}
\thanks{The first author was partially supported  by the project  PN-II-RU-TE-2012-3-0161, granted by the Romanian National Authority for Scientific Research,
CNCS -- UEFISCDI. The second author was partially supported by
the German Research Council DFG-GRK~1916. The third author was partially supported by the Spanish Government---Ministerio de Econom\'ia y Competitividad (MINECO), grant MTM2012-36917-C03-03.}

\begin{abstract}
In this paper we introduce an algorithm for computing the Stanley depth of a finitely generated multigraded module $M$ over the polynomial ring $\KK[X_1, \ldots, X_n]$.
As an application, we give an example of a module whose Stanley depth is strictly greater than the depth of its syzygy module.
In particular, we obtain complete answers for two open questions raised by Herzog in \cite{H}. Moreover, we show that the question whether $M$ has Stanley depth at least $r$ can be reduced to the question whether a certain combinatorially defined polytope $\mathscr{P}$ contains a $\ZZ^n$-lattice point.
\end{abstract}

\maketitle

\section{Introduction}
Let $\KK$ be a field. Let $R=\KK[X_1, \ldots , X_n]$ be the standard $\ZZ^n$-graded polynomial ring, and let $M=\oplus M_{\ab}$ be a finitely generated $\ZZ^n$-graded $R$-module (also called multigraded in the sequel). The \emph{Stanley depth} of $M$, denoted $\sdep M$, is a combinatorial invariant of $M$ related
to a conjecture of Stanley from 1982 \cite[Conjecture 5.1]{St}, which states that, in the case when $\KK$ is infinite, the inequality $\depth M \leq \sdep M$ holds; this is nowadays called the \emph{Stanley conjecture}. We refer the reader to \cite{intro} for a short introduction to the subject and to \cite{H} for a comprehensive survey.

After the initial submission of this paper, a counterexample to Stanley's conjecture was given by Duval, Goeckner, Klivans, and Martin in \cite{counterexample}.
We would like to mention that the counterexample may be checked directly by computational methods.

The Stanley depth is an interesting invariant which naturally arises in various combinatorial and computational contexts, which remains rather elusive so far, cf.~\cite{Sturmfels1991,Murdock2002,N1,N2}.
Our goal is to answer the following natural question, which was raised by Herzog:
\begin{question}\label{Q:Herzog65}\cite[Question 1.65]{H}
	Does there exist an algorithm to compute the Stanley depth of finitely generated multigraded $R$-modules?
\end{question}

In the particular cases of $R$-modules which are either monomial ideals $I\subset R $, or quotients thereof, this question has been answered by Herzog, Vladoiu, and Zheng \cite{HVZ}. The majority of the published articles concerning Stanley depth are related to this result. A key remark is that, in the cases studied by Herzog, Vladoiu and Zheng, the Hilbert series already determines the module structure. So, the Stanley depth may be computed directly from the Hilbert series of $M$.
This leads to another interesting combinatorial invariant, called the \emph{Hilbert depth} of $M$, which was introduced by Bruns, Krattenthaler, and Uliczka in \cite{BKU}. In fact, the method of \cite{HVZ} extends directly to an algorithm for computing the Hilbert depth of finitely generated multigraded $R$-modules, introduced by the first and third author \cite{IJ} and the first author together with Zarojanu \cite{IZ}.
However, until now, little is known about the computation of the Stanley depth in general.

For computing either the Stanley or the Hilbert depth one has to consider certain combinatorial decompositions. They are called \emph{Stanley decompositions} in the first case, respectively \emph{Hilbert decompositions} in the second case.

An interesting fact is that---beside the interest raised among algebraists and combinatorialists by the conjecture of Stanley---Stanley decompositions have a separate life in applied mathematics.
This goes back to Sturmfels and White \cite{Sturmfels1991}, where it is shown how Stanley decompositions can be used to describe finitely generated graded algebras, e.g. rings of invariants under some group action.
More recently, this found applications in the normal form theory for systems of differential equations with nilpotent linear part (see Murdock \cite{Murdock2002}, Murdock and Sanders \cite{Murdock2007}, Sanders \cite{Sanders2007}).

It is also worth mentioning that, in the particular case of a normal affine monoid, suited Stanley (or Hilbert) decompositions have already been used with success in order to design arguable the fastest available algorithms for computing Hilbert series (see \cite{N1} and \cite{N2}). Further, these algorithms have been used for computing the Hilbert series (and subsequently the associated probability generating functions) corresponding to three well studied (but difficult to compute) voting situations with four candidates arising from the field of social choice: the Condorcet paradox, the Condorcet efficiency of plurality voting and  Plurality versus Plurality Runoff (see \cite{Sch} and \cite{N2} for details).

We remark that every Stanley decomposition is inducing a Hilbert decomposition, but the converse is not true. In fact, in many particular cases studied until now the converse also holds (for example in \cite{HVZ} and the related results). More generally, it makes sense to ask:

\begin{question}\label{Q:2}
	Which Hilbert decompositions are induced by Stanley decompositions?
\end{question}

A precise answer to Question \ref{Q:2} implies an answer to Question~\ref{Q:Herzog65}.
This is the main contribution of the present article:
In Theorem \ref{thm:main} we give an effective criterion to decide whether a given Hilbert decomposition is induced by a Stanley decomposition.
This leads directly to an algorithm for the computation of the Stanley depth of a finitely generated multigraded $R$-module, which we present in Section \ref{Algo}.

Further, as an application of our main result, we are able to construct a counterexample which gives a negative answer to the following open question, also raised by Herzog (see Subsection \ref{subsection:counterex}):
\begin{question}\label{Q:Herzog63}\cite[Question 1.63]{H}\label{q:syz}
	Let $M$ be a finitely generated multigraded $R$-module with syzygy module $Z_k$ for $k = 1,2,\ldots$.
	Is it true that $\sdep Z_{k+1}\ge \sdep Z_k$?
\end{question}

Moreover, we define and study the structure of the set of all \emph{$\gb$-determined Stanley decompositions} of a finitely generated multigraded $R$-module $M$.
In Section \ref{ssec:rado} we show that this set naturally corresponds to the set of solutions of a certain system of linear Diophantine inequalities.
In other words, the question whether $M$ has Stanley depth at least $r$ can be reduced to the question whether a certain combinatorially defined polytope $\mathscr{P}$ contains a $\ZZ^n$-lattice point.
This polytope $\mathscr{P}$ turns out to be an intersection of a certain affine subspace with the positive orthant and \emph{finitely many polymatroids}.

Finally, we would like to point out that we have not been able to either prove or disprove \cite[Conjecture~2]{A1} and \cite[Conjecture 1.64]{H}, despite several computational and theoretical attempts. Further research on these open conjectures is certainly desirable in view of the recent important advance made by Duval, Goeckner, Klivans and Martin in \cite{counterexample}.
\medskip

The article is organized as follows. In Section \ref{Sect:pre}, we fix the notation, recall the definitions and the necessary previous results. In  Section \ref{AlgStanley}, we formulate and prove Theorem \ref{thm:main}, which is the answer to Question \ref{Q:2}. In Section \ref{Algo}, we deduce an algorithm for the computation of the Stanley depth. This fully responds to Herzog's Question \ref{Q:Herzog65}.
In Section \ref{ApplicationsExamples} we answer Question \ref{Q:Herzog63} and we present several interesting applications of the main result.


\section{Prerequisites} \label{Sect:pre}

In this section we recall the basics about both Stanley and Hilbert decompositions.
We refer the reader to \cite{H} for a more comprehensive treatment.
\medskip

Let $\KK$ be a field, $R=\KK[X_1, \ldots ,X_n]$ be the polynomial ring with the fine $\ZZ^n$-grading, and let $M$ be a finitely generated $\ZZ^n$-graded $R$-module.
Throughout the paper, we denote the cardinality of a set $S$ by $|S|$ and we set $[n] := \set{1,\dotsc, n}$.
Moreover, $n$-tuples in $\ZZ^n$ will be denoted by boldface letters as $\ab, \bb, \ldots$, while $a_i$ will denote the $i$-th component of $\ab \in \ZZ^n$. Further, for $\ab\in\NN^n$, we set $\supp(\ab) := \set{i \in [n] \with a_i \neq 0}$ and $\Xb^\ab := X_1^{a_1}\dotsm X_n^{a_n}$.

\begin{defi}
\begin{enumerate}
	\item A \emph{Stanley decomposition} of $M$ is a finite family $(R_i,m_i)_{i\in I}$, in which all $m_i \in M$ are homogeneous and $R_i$ are subalgebras of $R$ generated by a subset of the indeterminates of $R$,
	such that $R_i\cap \Ann m_i=0$ for each $i\in I$, and
	\begin{equation}\label{eq:stadec}
		M=\Dirsum_{i\in I} m_iR_i
	\end{equation}
	as a multigraded $\KK$-vector space.
	\item A \emph{Hilbert decomposition} of $M$ is a finite family $(R_i,\Sb_i)_{i\in I}$, where $\Sb_i\in \ZZ^n$ and the $R_i$ are again subalgebras of $R$ generated by a subset of the indeterminates of $R$ for each $i\in I$, such that
	\begin{equation}\label{eq:hildec}
		M\iso\Dirsum_{i\in I} R_i(-\Sb_i)
	\end{equation}
	as a multigraded $\KK$-vector space.
\end{enumerate}
\end{defi}

Note that every Stanley decomposition $(R_i, m_i)_{i\in I}$ of $M$ gives rise to the Hilbert decomposition $(R_i, \deg m_i)_{i \in I}$.
In the sequel, we will say that a Hilbert decomposition \emph{is induced by a Stanley decomposition} if it arises in this way.
Moreover, observe that, in general, the $R$-module structure of a Stanley decomposition is different from that of $M$, and that Hilbert decompositions depend only on the Hilbert series of $M$, i.e. they do not take the $R$-module structure of $M$ into account.

\begin{defi}
	The \emph{depth} of a Hilbert (resp.~Stanley) decomposition is the minimal dimension of the subalgebras $R_i$ in the decomposition.
	Equivalently, it is the depth of the right-hand side of (\ref{eq:hildec}) (resp.~(\ref{eq:stadec})), considered as $R$-module.
	The \emph{multigraded Hilbert depth} (resp.~the \emph{Stanley depth}) of $M$ is then the maximal depth of a Hilbert (resp.~Stanley) decomposition of $M$.
	We write $\hdep M$ and $\sdep M$ for the multigraded Hilbert resp. Stanley depth.
\end{defi}

We denote by $\preceq$ the componentwise order on $\ZZ^n$ and we set
\[[\ab,\bb] := \set{\cb \in \ZZ^n \with \ab \preceq \cb \preceq \bb}\]
with $\ab,\bb \in \ZZ^n$.

For the computation of the Hilbert resp.~Stanley depth, one may restrict the attention to a certain \emph{finite} class of decompositions.
Let us briefly recall the details.
The module $M$ is said to be {\it positively $\gb$-determined} for $\gb \in \NN^n$ if $M_\ab=0$ for $\ab \notin \NN^n$ and the multiplication
map $\cdot X_k : M_{\ab} \longrightarrow M_{\ab+\eb_k}$ is an isomorphism whenever $a_k \ge g_k$, see Miller \cite{M}.
A characterization of positively $\gb$-determined modules is given by the following:
\begin{propo}\label{prop:ezra}\cite[Proposition 2.5]{M}
	The module $M$ is positively $\gb$-determined if and only if the multigraded Betti numbers of $M$ satisfy $\beta_{0,\ab}^R(M)=\beta_{1,\ab}^R(M)=0$ unless $\ab \in [\nb, \gb]$.
\end{propo}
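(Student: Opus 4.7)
The plan is to relate positively $\gb$-determined modules to their minimal free resolutions via the basic observation that the shifted free module $R(-\ab)$ is itself positively $\gb$-determined precisely when $\ab \in [\nb, \gb]$. From this, both implications reduce to diagram chases on the minimal free presentation $F_1 \to F_0 \to M \to 0$.

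For the forward direction, I would first dispatch the $\beta_0$ claim via $\beta_{0,\ab}^R(M) = \dim_\KK (M/\mm M)_\ab$: if $\ab \notin \NN^n$ then $M_\ab = 0$ by hypothesis, while if $a_k > g_k$ for some $k$, then the isomorphism $\cdot X_k \colon M_{\ab - \eb_k} \to M_\ab$ (available since $a_k - 1 \geq g_k$) shows $M_\ab \subseteq \mm M$, so the graded piece of $M/\mm M$ vanishes. For $\beta_1$, I write the first step of the minimal free resolution as a short exact sequence $0 \to Z_1 \to F_0 \to M \to 0$; by what was just proved, $F_0 = \bigoplus_j R(-\ab_j)$ with each $\ab_j \in [\nb, \gb]$, so $F_0$ is positively $\gb$-determined. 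Reading the short exact sequence in multidegrees $\ab$ and $\ab + \eb_k$ side by side produces a commutative ladder with exact rows, and the five lemma then promotes the isomorphisms on the $F_0$- and $M$-columns (guaranteed when $a_k \geq g_k$) to one on the $Z_1$-column; the vanishing in degrees outside $\NN^n$ is inherited from $F_0$. Hence $Z_1$ is positively $\gb$-determined, and the already-proved $\beta_0$ statement applied to $Z_1$ gives the $\beta_1$ claim, because $\beta_{1,\ab}^R(M) = \beta_{0,\ab}^R(Z_1)$.

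For the converse, the Betti-number hypothesis forces both $F_0$ and $F_1$ in the minimal presentation to be direct sums of shifts $R(-\ab)$ with $\ab \in [\nb, \gb]$, so both are positively $\gb$-determined by the baseline observation. Taking the multidegree-$\ab$ piece of $F_1 \to F_0 \to M \to 0$ yields an exact row, and comparing with the analogous row at multidegree $\ab + \eb_k$ via the $\cdot X_k$ maps gives a commutative diagram whose left two columns are isomorphisms when $a_k \geq g_k$; a short chase (in essence the four lemma) propagates this to the rightmost column. The vanishing $M_\ab = 0$ for $\ab \notin \NN^n$ is immediate since $(F_0)_\ab = 0$ in that range. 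The main obstacle is the syzygy step in the forward direction: once one verifies that $Z_1$ inherits the positively $\gb$-determined property from $F_0$ and $M$, the rest amounts to routine bookkeeping with multigraded components.
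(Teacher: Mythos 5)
The paper does not prove this proposition at all---it is quoted from Miller \cite[Proposition 2.5]{M} and used as a black box---so there is no in-paper argument to compare against. Your proof is correct and is essentially the standard argument for Miller's result: the baseline observation that $R(-\ab)$ is positively $\gb$-determined exactly when $\ab \in [\nb,\gb]$ checks out (multiplication by $X_k$ on $R(-\ab)$ fails to be an isomorphism only from degree $\bb$ with $b_k = a_k - 1$, which is excluded for $b_k \ge g_k$ precisely when $a_k \le g_k$); the $\beta_0$ step via $(M/\mm M)_\ab$ is right, since $a_k > g_k$ gives $M_\ab = X_k M_{\ab - \eb_k} \subseteq \mm M$; the passage to $\beta_1$ through $0 \to Z_1 \to F_0 \to M \to 0$ with the snake/five lemma and $\beta_{1,\ab}(M) = \beta_{0,\ab}(Z_1)$ (minimality) is sound; and the converse, comparing the degree-$\ab$ and degree-$(\ab+\eb_k)$ rows of the minimal presentation and passing to cokernels, is exactly what is needed, with the vanishing of $M_\ab$ outside $\NN^n$ inherited from $(F_0)_\ab = 0$. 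In short: you have supplied a complete, self-contained proof of a statement the paper only cites, and it follows the route one would find in Miller's paper.
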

In particular, if $M$ has no components with negative degrees, then it is always $\gb$-determined for a sufficiently large $\gb\in \NN^n$.
For our purpose, the importance of $M$ being positively $\gb$-determined is that it allows us to restrict the search space for possible Hilbert or Stanley decompositions, as we explain in the following.

For a given Hilbert decomposition $(R_i, \Sb_i)_{i \in I}$ of $M$ and a multidegree $\ab \in \NN^n$, let
\[ \cC(\ab) := \set{i \in I \with (R_i(-\Sb_i))_{\ab} \neq 0 } \]
be the set of indices of those Hilbert spaces that contribute to degree $\ab$.
\begin{propo}\label{lem:gstandart}
	Let $M$ be a positively $\gb$-determined module.
	The following statements are equivalent for a Hilbert decomposition $(\KK[Z_i], \Sb_i)_{i \in I}$ of $M$:
	\begin{enumerate}
		\item $\Sb_i \preceq \gb$ for all $i \in I$.
		\item $\set{j \with (\Sb_i)_j = g_j} \subseteq Z_i$ for all $i \in I$.
		\item $\cC(\ab) = \cC(\ab \wedge \gb)$ for all $\ab \in \NN^n$. (Here, $\ab \wedge \gb$ denotes the componentwise minimum.)
		\item $\bigoplus_{i} \KK[Z_i](-\Sb_i)$ is $\gb$-determined as $R$-module.
	\end{enumerate}
\end{propo}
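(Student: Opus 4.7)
The plan is to prove the chain of equivalences by exploiting the identity $|\cC(\ab)| = |\cC(\ab \wedge \gb)|$, valid for every $\ab \in \NN^n$ since the Hilbert decomposition gives $M_\ab$ the $\KK$-basis $\{\Xb^{\ab - \Sb_i} \with i \in \cC(\ab)\}$, so $\dim M_\ab = |\cC(\ab)|$, while iterated multiplication by the variables $X_k$ with $a_k > g_k$ yields a $\KK$-linear isomorphism $M_{\ab\wedge\gb} \to M_\ab$ from the $\gb$-determinedness of $M$. Throughout I will use the combinatorial description $i \in \cC(\ab)$ if and only if $\Sb_i \preceq \ab$ and $\supp(\ab - \Sb_i) \subseteq Z_i$.

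First I would dispose of the implications (3) $\Rightarrow$ (1), (2) by well-chosen substitutions. Taking $\ab = \Sb_i$ in (3) puts $i$ into $\cC(\Sb_i \wedge \gb)$, so $\Sb_i \preceq \Sb_i \wedge \gb$, giving (1). Once (1) is available, for an index $i$ with $(\Sb_i)_j = g_j$ the multidegree $\ab \defa \Sb_i + \eb_j$ satisfies $\ab \wedge \gb = \Sb_i$, so $i \in \cC(\Sb_i) = \cC(\ab)$ by (3), which forces $\{j\} = \supp(\ab - \Sb_i) \subseteq Z_i$, giving (2).

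To close the equivalence of (1), (2), (3), I would prove each of (1) $\Rightarrow$ (3) and (2) $\Rightarrow$ (3) by producing one containment between $\cC(\ab)$ and $\cC(\ab \wedge \gb)$ and promoting it to equality via the free cardinality identity. Under (1), any $i \in \cC(\ab)$ satisfies $\Sb_i \preceq \ab \wedge \gb$, and every coordinate $k$ with $(\ab \wedge \gb - \Sb_i)_k > 0$ also has $a_k > (\Sb_i)_k$, hence $k \in Z_i$; this yields $\cC(\ab) \subseteq \cC(\ab \wedge \gb)$. Under (2), I would argue in the opposite direction by contradiction: if $i \in \cC(\ab \wedge \gb) \setminus \cC(\ab)$ then some $k \notin Z_i$ has $a_k > (\Sb_i)_k$, but the support condition on $\ab \wedge \gb - \Sb_i$ forces $\min(a_k, g_k) = (\Sb_i)_k$, so $(\Sb_i)_k = g_k$, contradicting (2).

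For the equivalence with (4), I would unpack the $R$-module structure on $\bigoplus_i \KK[Z_i](-\Sb_i)$, in which $X_j$ acts on the $i$-th summand as multiplication if $j \in Z_i$ and as zero otherwise. Being $\gb$-determined requires this multiplication to be an isomorphism from degree $\ab$ to degree $\ab + \eb_j$ whenever $a_j \geq g_j$. Injectivity at $\ab = \Sb_i$ forces $j \in Z_i$ whenever $(\Sb_i)_j \geq g_j$; surjectivity applied at $\ab = \Sb_i - \eb_j$ for any hypothetical $i$ with $(\Sb_i)_j > g_j$ rules out such an $i$, giving (1), and the injectivity condition then reduces to (2). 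Conversely, under (1) and (2) a direct verification shows $\cC(\ab + \eb_j) = \{i \in \cC(\ab) \with j \in Z_i\}$ for $a_j \geq g_j$, exhibiting the multiplication map as an isomorphism in each such degree. The main obstacle throughout is keeping organized the several case splits (on whether a coordinate equals, exceeds, or falls below $g_j$), but no deeper input beyond careful use of the $\gb$-determined hypothesis is required.
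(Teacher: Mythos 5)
Your proof is correct, and for the equivalence of (1)--(3) it rests on exactly the paper's key mechanism: the counting identity $|\cC(\ab)|=\dim_\KK M_\ab=\dim_\KK M_{\ab\wedge\gb}=|\cC(\ab\wedge\gb)|$ coming from $\gb$-determinedness, so that a single containment between $\cC(\ab)$ and $\cC(\ab\wedge\gb)$ upgrades to equality. You only rearrange the implication graph: the paper runs the cycle (1)$\Rightarrow$(2)$\Rightarrow$(3)$\Rightarrow$(1), proving (1)$\Rightarrow$(2) by the same count in degree $\Sb_i+\eb_j$ that you use to extract (2) from (3), whereas you prove (1)$\Leftrightarrow$(3) and (2)$\Leftrightarrow$(3) separately; both of your containment arguments are sound. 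The genuine difference is in (4): the paper settles (1)\,\&\,(2)$\Leftrightarrow$(4) in one line by noting that the multigraded Betti numbers $\beta_0,\beta_1$ of $\bigoplus_i\KK[Z_i](-\Sb_i)$ sit in degrees $\Sb_i$ and $\Sb_i+\eb_j$ with $j\notin Z_i$, and invoking Miller's criterion (Proposition \ref{prop:ezra}), while you argue directly from the definition via injectivity and surjectivity of $\cdot X_j$ at the degrees $\Sb_i$ and $\Sb_i-\eb_j$. Your route is more elementary and self-contained (it does not need the Betti-number characterization), at the cost of the explicit isomorphism check in the converse direction. Two details worth making explicit there: the identity $\cC(\ab+\eb_j)=\set{i\in\cC(\ab)\with j\in Z_i}$ by itself only gives surjectivity of $\cdot X_j$; injectivity needs either the observation that (1)\,\&\,(2) force $j\in Z_i$ for \emph{every} $i\in\cC(\ab)$ when $a_j\geq g_j$, or the dimension count you already set up. Also, saying that the Hilbert decomposition gives $M_\ab$ the basis $\set{\Xb^{\ab-\Sb_i}\with i\in\cC(\ab)}$ is an abuse (a Hilbert decomposition is only an isomorphism of multigraded vector spaces, not a choice of elements of $M$); all you need and actually use is $\dim_\KK M_\ab=|\cC(\ab)|$.
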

\begin{proof}
{\bf (1)$\Longrightarrow$(2)}
		Let $i \in I$ and $j \in [n]$ such that $g_j = (\Sb_i)_j$.
		By assumption (1), it holds that $(\Sb_{i'})_{j} \leq g_j = (\Sb_i)_j$ for every $i' \in I$.
		Hence, $i' \in \cC(\Sb_i + \eb_j)$ implies that $X_j \in Z_{i'}$ and $\KK[Z_{i'}](-\Sb_{i'})_{\Sb_i} \neq 0$.
		In particular, $\cC(\Sb_i + \eb_j) \subseteq \cC(\Sb_i)$.
		But $M$ is $\gb$-determined, so
		\[|\cC(\Sb_i + \eb_j)| = \dim_\KK M_{\Sb_i + \eb_j} = \dim_\KK M_{\Sb_i} = |\cC(\Sb_i)| \]
		Thus $i \in \cC(\Sb_i) = \cC(\Sb_i + \eb_j)$ and the claim follows.
		
{\bf (2) $\Longrightarrow$ (3)}
		We first show that $\cC(\ab \wedge \gb) \subseteq \cC(\ab)$ for $\ab \in \NN^n$.
		Let $i \in \cC(\ab \wedge \gb)$.
		It suffices to prove that for each $j \in [n]$ with $(\Sb_i)_j < a_j$, it holds that $j \in Z_i$.
		Note that $(\Sb_i)_j \leq (\ab \wedge \gb)_j$ for every $j$.
		Moreover, if $(\Sb_i)_j < (\ab \wedge \gb)_j$ then $i \in \cC(\ab \wedge \gb)$ implies that $j \in Z_i$.
		On the other hand, if $(\Sb_i)_j = (\ab \wedge \gb)_j$ and $(\Sb_i)_j < a_j$, then $(\Sb_i)_j = g_j$ and thus $j \in Z_i$ by assumption.
		It follows that $\cC(\ab \wedge \gb) \subseteq \cC(\ab)$.
		
		Further, $M$ being $\gb$-determined implies as above that $|\cC(\ab)| = |\cC(\ab \wedge \gb)|$ and thus $\cC(\ab) = \cC(\ab \wedge \gb)$.
		
{\bf (3) $\Longrightarrow$ (1)}
		For each $i \in I$, it holds that $i \in \cC(\Sb_i) = \cC(\Sb_i \wedge \gb)$. Therefore $\Sb_i \preceq \Sb_i \wedge \gb \preceq \gb$.
	
{\bf (1) and (2) $\Longleftrightarrow$ (4)}
		This follows easily by considering the Betti numbers of
$$\bigoplus_{i} \KK[Z_i](-\Sb_i).$$
\end{proof}

Note that the conditions are not equivalent if $M$ is not $\gb$-determined.
Moreover, the existence of a Hilbert decomposition satisfying these conditions for some $\gb \in \NN^n$ does not imply that $M$ is $\gb$-determined.
Motivated by the preceding proposition we introduce the following:
\begin{defi}
\begin{enumerate}
	\item A Hilbert decomposition $\mathfrak{D}$ of $M$ is called \emph{$\gb$-determined} if $M$ is positively $\gb$-determined and $\mathfrak{D}$ satisfies the equivalent conditions of Proposition \ref{lem:gstandart}.
	\item A Stanley decomposition $(R_i, m_i)_{i \in I}$ of $M$ is called \emph{$\gb$-determined} if the underlying Hilbert decomposition $(R_i, \deg m_i)_{i \in I}$ is $\gb$-determined.
\end{enumerate}
\end{defi}

Every Hilbert decomposition of $M$ is $\gb$-determined for a sufficiently large $\gb \in \NN^n$.
On the other hand, for a fixed $\gb \in \NN^n$ there are only \emph{finitely} many $\gb$-determined Hilbert decompositions.
By the following result, it is essentially sufficient to consider $\gb$-determined Hilbert (Stanley) decompositions if $M$ is $\gb$-determined:
\begin{propo}\label{prop:gdet}
	Let $M$ be positively $\gb$-determined.
	\begin{enumerate}
		\item There exists a $\gb$-determined Hilbert decomposition of $M$ whose depth equals the Hilbert depth of $M$.
		\item Similarly, there exists a $\gb$-determined Stanley decomposition of $M$ whose depth equals the Stanley depth of $M$.
	\end{enumerate}
\end{propo}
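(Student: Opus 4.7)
The strategy is to take a decomposition of $M$ achieving the maximum depth and transform it, in two stages, into a $\gb$-determined decomposition without lowering the depth. I address conditions (1) and (2) of Proposition~\ref{lem:gstandart} separately, treating the Hilbert case first and deducing the Stanley case by a lifting argument.

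\textbf{Stage I: ensuring (2), given (1).} Suppose $\mathfrak{D}=(\KK[Z_i],\Sb_i)_{i\in I}$ is a Hilbert decomposition of $M$ with $\Sb_i\preceq\gb$ for every $i$, and set $Z_i':=Z_i\cup\{j:(\Sb_i)_j=g_j\}$. The enlarged family satisfies both (1) and (2), so by Proposition~\ref{lem:gstandart}(4) the direct sum $N':=\bigoplus_i\KK[Z_i'](-\Sb_i)$ is $\gb$-determined as an $R$-module. At any degree $\ab\preceq\gb$, each added index $j\in Z_i'\setminus Z_i$ has $(\Sb_i)_j=g_j\ge a_j$ and hence cannot lie in $\supp(\ab-\Sb_i)$; therefore the Hilbert function of $N'$ agrees with that of $\mathfrak{D}$, and in particular with that of $M$, at all $\ab\preceq\gb$. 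The joint $\gb$-determinedness of $N'$ and $M$ then propagates this equality to every $\ab\in\NN^n$ via $\dim N'_\ab=\dim N'_{\ab\wedge\gb}=\dim M_{\ab\wedge\gb}=\dim M_\ab$, so $N'$ is a Hilbert decomposition of $M$. Since $|Z_i'|\ge|Z_i|$, the depth is not reduced.

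\textbf{Stage II: ensuring (1).} Starting from a maximum-depth Hilbert decomposition $\mathfrak{D}$, I would induct on the defect
\[
\delta(\mathfrak{D}):=\sum_{i\in I}\sum_{j=1}^{n}\max\{(\Sb_i)_j-g_j,\,0\},
\]
aiming to drive $\delta$ to zero without lowering the depth. If $\delta(\mathfrak{D})>0$, pick $i_0,j_0$ with $(\Sb_{i_0})_{j_0}>g_{j_0}$ and apply the multigraded vector-space identity
\[
\KK[Z](-\Sb)\oplus\KK[Z\setminus\{j\}](-(\Sb-\eb_j))\;\iso\;\KK[Z](-(\Sb-\eb_j))\qquad(j\in Z)
\]
to merge the offending summand with a partner of the form $(\KK[Z_{i_0}\setminus\{j_0\}],\Sb_{i_0}-\eb_{j_0})$ into the single summand $(\KK[Z_{i_0}],\Sb_{i_0}-\eb_{j_0})$. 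The merge strictly decreases $\delta$ and does not lower the depth, since the resulting $Z$ is the larger of the two. The existence of a usable partner---possibly after auxiliary rearrangements, e.g.\ a preliminary step bringing $j_0$ into $Z_{i_0}$---is forced by the $\gb$-determinedness of $M$: the identity $\dim M_\ab=\dim M_{\ab\wedge\gb}$ yields the counting equality $|\cC(\ab)|=|\cC(\ab\wedge\gb)|$ used in the proof of Proposition~\ref{lem:gstandart}, which constrains which summands must appear near the bad degree $\Sb_{i_0}$.

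\textbf{The Stanley case (part 2).} The same combinatorial program runs on the underlying Hilbert decomposition of a maximum-depth Stanley decomposition. The extra ingredient is that each merge must be realized by an actual homogeneous generator: for the new summand $(\KK[Z_{i_0}],\Sb_{i_0}-\eb_{j_0})$, I would choose the Stanley generator $m'\in M_{\Sb_{i_0}-\eb_{j_0}}$ to be the unique preimage of the old generator $m_{i_0}$ under the isomorphism $\cdot X_{j_0}\colon M_{\Sb_{i_0}-\eb_{j_0}}\to M_{\Sb_{i_0}}$ provided by positive $\gb$-determinedness (since $(\Sb_{i_0})_{j_0}-1\ge g_{j_0}$). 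Compatibility of $m'$ with the partner's original generator (up to a scalar) may require a further rearrangement of the partner by invoking the same isomorphism at its own degree.

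\textbf{Main obstacle.} The combinatorial heart of the proof is Stage~II: ensuring that a usable merging partner is always available throughout the induction. In general one may first need to split a summand via the dual identity $\KK[Z](-\Sb)\iso\KK[Z\setminus\{j\}](-\Sb)\oplus\KK[Z](-(\Sb+\eb_j))$ (for $j\in Z$), which temporarily lowers the depth, before the subsequent merges restore it. Sequencing these operations so as to maintain a maximum-depth decomposition throughout is the delicate step.
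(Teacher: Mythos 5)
There is a genuine gap, and it sits exactly where you locate it yourself: Stage~II. Note first that Stage~I is redundant: Proposition~\ref{lem:gstandart} states an \emph{equivalence}, so once every shift satisfies $\Sb_i \preceq \gb$, condition (2) holds automatically and no enlargement of the $Z_i$ is needed. Hence the entire content of the proposition is the claim that an optimal Hilbert (resp.\ Stanley) decomposition can be replaced by one with all $\Sb_i \preceq \gb$ without losing depth --- and for this your argument is only a sketch. The merge identity $\KK[Z](-\Sb)\oplus\KK[Z\setminus\{j\}](-(\Sb-\eb_j))\iso\KK[Z](-(\Sb-\eb_j))$ requires a partner summand with \emph{exactly} the variable set $Z_{i_0}\setminus\{j_0\}$ and \emph{exactly} the shift $\Sb_{i_0}-\eb_{j_0}$ (and $j_0\in Z_{i_0}$, which itself needs a ``preliminary rearrangement'' you do not supply). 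The counting identity $|\cC(\ab)|=|\cC(\ab\wedge\gb)|$ does hold for any Hilbert decomposition of a positively $\gb$-determined $M$ (both sides equal $\dim_\KK M_\ab$), but it only constrains cardinalities; it does not produce a summand of the required shape, and the summands covering $\Sb_{i_0}-\eb_{j_0}$ may have unrelated shifts and variable sets. You concede that one may have to split summands first, which \emph{lowers} the depth, and you give no argument that the depth is recovered, nor that the process can be sequenced so that $\delta$ decreases while optimality is maintained. The Stanley case has the same problem plus an extra one: setting $m'$ to be the $X_{j_0}$-preimage of $m_{i_0}$ gives $m'\KK[Z_{i_0}]=m'\KK[Z_{i_0}\setminus\{j_0\}]\oplus m_{i_0}\KK[Z_{i_0}]$, but you must still show that $m'\KK[Z_{i_0}\setminus\{j_0\}]$ can stand in for the partner's Stanley space inside $M$ without destroying directness; ``may require a further rearrangement'' is not an argument. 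So, as written, the proposal does not prove the statement.

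For comparison, the paper does not perform any surgery on a given optimal decomposition: it simply observes that the proofs of Corollary~3.4 and Corollary~4.7 of \cite{IJ} construct Hilbert resp.\ Stanley decompositions realizing the Hilbert resp.\ Stanley depth using only intervals inside $[\nb,\gb]$, i.e.\ the decompositions produced there are already $\gb$-determined. In other words, the known route proves the weaker (and sufficient) existence statement directly, rather than the stronger claim --- implicit in your plan --- that an \emph{arbitrary} optimal decomposition can be transformed into a $\gb$-determined one of the same depth. If you want a self-contained proof, it is more promising to follow that construction (or a compression-to-$[\nb,\gb]$ argument) than to try to repair the merge/split induction.
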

\begin{proof}
	This is immediate from Corollary 3.4 and Corollary 4.7 of \cite{IJ}, since the decompositions used there are $\gb$-determined.
\end{proof}


\section{Which Hilbert decompositions are induced by Stanley decompositions?}\label{AlgStanley}
In this section we characterize those Hilbert decompositions which are induced by Stanley decompositions.
Throughout the section, we fix a finitely generated $\ZZ^n$-graded $R$-module $M$
and a Hilbert decomposition $\mathfrak{D} = (R_i, \Sb_i)_{i \in I}$ of $M$.
Without loss of generality, we shall assume that both $M$ and $\mathfrak{D}$ are (positively) $\gb$-determined for some $\gb \in \NN^n$.
As above, we set
\[ \cC(\ab) := \set{i \in I \with (R_i(-\Sb_i))_{\ab} \neq 0 } \]
for each multidegree $\ab \in \NN^n$.
Then, Proposition 4.4 of \cite{IJ} may be reformulated as follows:
\begin{propo}\cite[Proposition 4.4]{IJ}\label{hilbert:stanley}
	The given Hilbert decomposition of $M$ is induced by a Stanley decomposition if and only if
	there exist homogeneous elements $(m_i)_{i \in I} \subset M$ with $\deg m_i = \Sb_i$ such that the following holds:
	
	For all $i\in I$ we have that $R_i\cap \Ann m_i=0$, and for all $\ab \in \NN^n, \ab \preceq \gb$, the set
\begin{equation}\label{eq:testset}
\set{ \Xb^{\ab-\Sb_i} m_i \with i \in \cC(\ab)}\subset M_{\ab}
\end{equation}
	is $\KK$-linearly independent.
\end{propo}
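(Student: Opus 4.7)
The forward direction is immediate from the definitions. Suppose $(R_i, m_i)_{i \in I}$ is a Stanley decomposition of $M$ inducing the given Hilbert decomposition. Then $R_i \cap \Ann m_i = 0$ for each $i$ by definition, and the injective $R_i$-linear map $R_i(-\Sb_i) \to M$ sending $1 \mapsto m_i$ realizes each submodule $m_i R_i$ as a graded copy of $R_i(-\Sb_i)$ inside $M$. Degree by degree, $(m_i R_i)_\ab = \KK \cdot \Xb^{\ab - \Sb_i} m_i$ exactly when $i \in \cC(\ab)$, and vanishes otherwise. The decomposition $M_\ab = \Dirsum_i (m_i R_i)_\ab$ then shows that the set in \eqref{eq:testset} is a $\KK$-basis of $M_\ab$, and in particular $\KK$-linearly independent.

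For the reverse direction, suppose that elements $(m_i)_{i \in I}$ with $\deg m_i = \Sb_i$ satisfy the two stated conditions. The hypothesis $R_i \cap \Ann m_i = 0$ again makes the $R_i$-linear map $R_i(-\Sb_i) \to M$, $1 \mapsto m_i$, injective, so each $m_i R_i \subset M$ is graded-isomorphic to $R_i(-\Sb_i)$. Consequently, $(m_i R_i)_\ab = \KK \cdot \Xb^{\ab - \Sb_i} m_i$ whenever $i \in \cC(\ab)$ and vanishes otherwise. It therefore suffices to prove that for every $\ab \in \NN^n$ the family
\[\set{\Xb^{\ab - \Sb_i} m_i \with i \in \cC(\ab)}\]
is a $\KK$-basis of $M_\ab$, since this at once yields $M = \Dirsum_{i \in I} m_i R_i$ as a multigraded $\KK$-vector space, as required.

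Now since the Hilbert decomposition has the same Hilbert function as $M$, one has $|\cC(\ab)| = \dim_\KK M_\ab$ for every $\ab$, so $\KK$-linear independence of the above family will automatically upgrade to a basis. For $\ab \preceq \gb$ this independence is exactly the hypothesis. To pass to an arbitrary $\ab \in \NN^n$, set $\bb \defa \ab \wedge \gb$. By Proposition~\ref{lem:gstandart}(3) we have $\cC(\ab) = \cC(\bb)$, and the positive $\gb$-determinedness of $M$ forces multiplication by $\Xb^{\ab - \bb}$ to be an isomorphism $M_\bb \xrightarrow{\sim} M_\ab$. Applying this isomorphism sends the already-established basis of $M_\bb$ to the desired basis of $M_\ab$, completing the argument.

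The main technical point is precisely this last reduction: without the $\gb$-determinedness of both $M$ and $\mathfrak{D}$, one would in principle need to verify infinitely many linear independence conditions, one for each $\ab \in \NN^n$. It is Proposition~\ref{lem:gstandart} together with the iso-behaviour of multiplication by variables beyond the threshold $\gb$ that collapses these to the finite test set $\set{\ab \in \NN^n \with \ab \preceq \gb}$ appearing in the statement.
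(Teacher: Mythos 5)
Your argument is correct. Note that the paper itself gives no proof of this proposition: it is quoted from \cite[Proposition 4.4]{IJ} and merely \emph{reformulated} there, the reformulation being precisely the restriction of the independence test from all $\ab \in \NN^n$ to the finite box $\ab \preceq \gb$. Your degreewise counting argument (injectivity of $R_i(-\Sb_i) \to M$ from $R_i \cap \Ann m_i = 0$, plus $|\cC(\ab)| = \dim_\KK M_\ab$ upgrading independence to a basis) is the natural proof of the original statement, and your reduction step --- $\cC(\ab) = \cC(\ab \wedge \gb)$ from Proposition~\ref{lem:gstandart}(3) together with the fact that multiplication by $\Xb^{\ab - (\ab\wedge\gb)}$ is an isomorphism $M_{\ab\wedge\gb} \to M_{\ab}$ because every coordinate being raised already sits at the threshold $g_k$ --- is exactly what justifies the paper's reformulation. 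So the proposal correctly supplies both the cited result and the bridge to the form stated here.
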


\newcommand{\mt}{\tilde{m}}%
The difficulty for applying this result is that one has to choose the right elements $m_i\in M_{\Sb_i}$ in order to determine whether a given Hilbert decomposition is induced by a Stanley decomposition.
In the sequel we present a method for circumventing this problem.
The idea is to consider (for all $i\in I$) \qq{generic} elements $\mt_i \in M_{\Sb_i}$  and to test (for all $\ab \in [\nb,\gb]$) the linear independence of the sets \eqref{eq:testset}  via computations of determinants.
We make this precise in the following manner.

\begin{construction}\label{const}
For the given Hilbert decomposition $(R_i, \Sb_i)_{i \in I}$ of $M$, we construct a collection of matrices $(A_{\ab})_{\ab \in [\nb,\gb]}$ as follows.
First, for each $\ab \in [\nb,\gb]$, we choose a basis $\set{b_{\ab,1}, \dotsc, b_{\ab,l_\ab}}$ for the $\KK$-vector space $M_\ab$. Then,
for each $i \in I$, we set $\mt_i := \sum_{j} Y_{i,j} b_{\Sb_i,j}$ with indeterminate coefficients $Y_{i,1}, \dotsc, Y_{i,l_{\Sb_i}}$.

The matrix $A_{\ab}$ has one row for each of the basis vectors of $M_{\ab}$ and one column for each $i \in \cC(\ab)$.
For every such $i$, expand $\Xb^{\ab - \Sb_i} \mt_i$ in the chosen basis of $M_{\ab}$ and write the coefficients into $A_{\ab}$. More explicitly, if
\[
\Xb^{\ab - \Sb_i}b_{\Sb_i,j}=\sum_{k}c_{j,k}b_{\ab,k}
\]
with $c_{j,k}\in \KK$, then
\[
\Xb^{\ab - \Sb_i} \mt_i = \sum_{k}\big(\sum_{j} c_{j,k} Y_{i,j} \big) b_{\ab,k}.
\]
We set $A_{\ab}=(\sum_{j} c_{j,k} Y_{i,j})_{i,k}$.
For the ease of reference, we also set
\[\tilde{I} := \set{(i,j) \with i \in I, 1\leq j \leq l_{\Sb_i}},\]
so that the entries of $A_{\ab}$ live in the polynomial ring $\KK[Y_{i,j} \with (i,j) \in \tilde{I}]$.
\end{construction}

Note that the entries of $A_{\ab}$ are linear polynomials in the $Y_{i,j}$.
Moreover, the matrices $A_{\ab}$ are square matrices, because the number of rows equals $\dim M_{\ab}$, while the number of columns equals the cardinality of $\cC(\ab)$. But this is also $\dim M_{\ab}$, as we started with a Hilbert decomposition.

\begin{ex}
	We give a simple example to illustrate the construction.
	Let $R = \KK[X_1, X_2]$ and $M = (X_1, X_2) \oplus (X_1X_2) \subset R^2$.
	The module $M$ is positively $\gb$-determined for $g = (1,1)$.
	Let $e_1,e_2$ be the generators of $R^2$.
	We choose as vector space bases $X_1 e_1, X_2 e_1, X_1 X_2 e_1$ and $X_1 X_2 e_2$ for the corresponding components of $M$.
	Consider the Hilbert decomposition
	\[ M \cong R(-1,0) \oplus R(0,-1). \]
	We have $\mt_1 = Y_{1,1} X_1 e_1$ and $\mt_2 = Y_{2,1} X_2 e_1$.
	The matrices $A_{\ab}$ constructed above are in this case
	\begin{equation*}
		A_{(1,0)} = \begin{pmatrix} Y_{1,1} \end{pmatrix}
		\qquad
		A_{(0,1)} = \begin{pmatrix} Y_{2,1} \end{pmatrix}
		\qquad
		A_{(1,1)} = \begin{pmatrix} Y_{1,1} & Y_{2,1} \\ 0 & 0 \end{pmatrix}.
	\end{equation*}
\end{ex}
%
Next theorem is the main result of this paper.
\begin{theo}\label{thm:main}
	With the notation introduced in Construction \ref{const}, the following holds:
	\begin{itemize}
	\item[(a)] Assume $|\KK| = \infty$. Then the given Hilbert decomposition of $M$ is induced by a Stanley decomposition if and only if the determinant of $A_{\ab}$ is not the zero polynomial for all $\ab \in [\nb,\gb]$.
	\item[(b)] Assume $|\KK| = q < \infty$. Let $P := \prod_{\ab \in [\nb,\gb]} \det A_{\ab}$.
		Let further $\tilde{P}$ be the polynomial obtained from $P$ as follows: From every exponent of every monomial in $P$, subtract $q-1$ until the remainder is less than $q$. Then the given Hilbert decomposition of $M$ is induced by a Stanley decomposition if and only if $\tilde{P} \neq 0$.
	\end{itemize}
\end{theo}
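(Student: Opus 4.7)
The plan is to translate the existence criterion of Proposition~\ref{hilbert:stanley} into the non-vanishing of a polynomial, and then invoke standard facts about non-vanishing polynomials on $\KK^N$, separately treating the cases $|\KK| = \infty$ and $|\KK| = q < \infty$.

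First I would unwind Construction~\ref{const}: for $y = (y_{i,j}) \in \KK^{|\tilde I|}$, the matrix $A_{\ab}(y)$ obtained by substituting $Y_{i,j} \mapsto y_{i,j}$ has as its columns the coordinate vectors, in the basis $\{b_{\ab,k}\}$ of $M_{\ab}$, of the elements $\Xb^{\ab - \Sb_i} m_i$ for $i \in \cC(\ab)$, where $m_i := \sum_j y_{i,j} b_{\Sb_i,j}$. Because $A_{\ab}$ is a square matrix of size $\dim_\KK M_{\ab} = |\cC(\ab)|$---the equality is forced by $\mathfrak{D}$ being a Hilbert decomposition, since each summand $R_i(-\Sb_i)_{\ab}$ is either zero or one-dimensional---the set in \eqref{eq:testset} is $\KK$-linearly independent if and only if $\det A_{\ab}(y) \neq 0$. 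I would also check that whenever this linear independence holds for every $\ab \in [\nb,\gb]$, the side condition $R_i \cap \Ann m_i = 0$ of Proposition~\ref{hilbert:stanley} is automatic: by homogeneity it suffices to verify $\Xb^\bb m_i \neq 0$ for every $\bb \in \NN^n$ with $\supp(\bb) \subseteq Z_i$; in the range $\Sb_i + \bb \preceq \gb$ this is the linear independence itself, and for $\Sb_i + \bb \not\preceq \gb$ one starts at $(\Sb_i + \bb) \wedge \gb$ and lifts to $\Sb_i + \bb$ using the $\gb$-determinedness isomorphisms $\cdot X_j : M_{\cb} \to M_{\cb + \eb_j}$, which are available whenever $c_j \geq g_j$. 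Combining, $\mathfrak{D}$ is induced by a Stanley decomposition if and only if there exists $y \in \KK^{|\tilde I|}$ with $P(y) \neq 0$, where $P := \prod_{\ab \in [\nb,\gb]} \det A_{\ab}$.

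Part (a) now follows from two classical observations: a product of polynomials over a field is nonzero if and only if each factor is, and a nonzero polynomial over an infinite field admits a nonvanishing point. For part (b), the difficulty is that over a finite field $\KK = \FF_q$ a nonzero polynomial may vanish on all of $\KK^{|\tilde I|}$. The remedy is the classical identification of polynomial functions $\KK^N \to \KK$ with the quotient ring $\KK[Y_1, \dotsc, Y_N]/(Y_1^q - Y_1, \dotsc, Y_N^q - Y_N)$, where $N := |\tilde I|$; a canonical $\KK$-basis of the quotient is given by the monomials whose every exponent lies in $\{0, \dotsc, q-1\}$. The prescribed reduction---subtracting $q-1$ from each exponent $\geq q$ until all exponents fall below $q$---is precisely the normal form in this quotient, since $Y^q \equiv Y$ yields $Y^a \equiv Y^{a-(q-1)}$ for $a \geq q$. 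Hence $P$ vanishes identically on $\KK^N$ if and only if $\tilde P = 0$, and (b) follows.

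The main obstacle is the verification that the side condition $R_i \cap \Ann m_i = 0$ is automatic from the linear independence conditions, which requires a careful appeal to the $\gb$-determinedness of $M$ in order to propagate non-vanishing of $\Xb^\bb m_i$ beyond the finite window $[\nb,\gb]$. Once this bookkeeping is settled, both (a) and (b) reduce to standard polynomial-function arguments over fields.
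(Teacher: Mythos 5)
Your proposal is correct and follows essentially the same route as the paper's proof: it reduces to Proposition~\ref{hilbert:stanley}, shows the condition $R_i \cap \Ann m_i = 0$ is automatic via the $\gb$-determinedness isomorphisms (you argue this directly where the paper argues by contradiction, but the idea is identical), identifies linear independence of \eqref{eq:testset} with $\det A_{\ab}(y) \neq 0$, and handles the finite-field case by reducing modulo $(Y_{i,j}^q - Y_{i,j})$, which is the same normal-form argument the paper phrases via Gr\"obner bases. No gaps.
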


\begin{proof}
We use the characterization of Proposition~\ref{hilbert:stanley}.
First, note that the assumption $R_i\cap \Ann m_i=0$ in Proposition \ref{hilbert:stanley} is not really needed:
If the sets
$$
\set{ \Xb^{\ab-\Sb_i} m_i \with i \in \cC(\ab)}
$$
are $\KK$-linearly independent for all $\ab \in [\nb,\gb]$, then the fact that $R_i\cap \Ann m_i=0$ for all $i$ follows automatically.
To see this, assume for the contrary that $R_j\cap \Ann m_j\neq0$ for some $j$. Then there exists a multidegree $\db \in \NN^n$ such that $\Xb^\db m_j = 0$ and $\Xb^\db \in R_j$. But as $M$ is $\gb$-determined, this implies that there exists  $\db'\preceq \db \in \NN^n$ such that $\Xb^{\db'} m_j = 0$ and $\db' + \Sb_j \preceq \gb$ (remember that the multiplication
map $\cdot X_k : M_{\db+\Sb_j-\eb_k} \longrightarrow M_{\db+\Sb_j}$ is an isomorphism if $(\db+\Sb_j)_k > g_k$).
Then the set $\set{ \Xb^{\db'} m_i \with i \in \cC(\db' + \Sb_j)}$ contains the zero vector and therefore cannot be linearly independent.

Next, consider a choice of elements $m_i = \sum_j y_{i,j} b_{\Sb_i,j}$ with $(y_{i,j})_{(i,j) \in \tilde{I}} \subset \KK$.
We now observe that for a fixed $\ab \in \NN^n$, the set $\set{ \Xb^{\ab-\Sb_i} m_i \with i \in \cC(\ab)}$ is $\KK$-linearly independent
if and only if $\det A_{\ab} ((y_{i,j})_{(i,j) \in \tilde{I}}) \neq 0$.
Hence the elements $m_i$ build a Stanley decomposition if and only if $\prod_{\ab \in[\nb,\gb]}\det A_{\ab} ((y_{i,j})_{(i,j) \in \tilde{I}}) \neq 0$.

If the field is infinite, then it is possible to choose such $y_{i,j}$ if and only if $P := \prod_{\ab \in [\nb,\gb]} \det A_{\ab}$ is not the zero polynomial.
This is clearly equivalent to each of the factors $\det A_{\ab}$ being nonzero.

If $\KK$ is finite,
then $P$ has a non-zero value over $\KK^{|\tilde{I}|}$ if and only if it is not contained
in the ideal $(Y_{i,j}^{q} - Y_{i,j} \with (i,j) \in \tilde{I})$.
This set of generators is already a (universal) Gr\"obner basis, hence $P$ is contained in the ideal if and only if its remainder modulo this Gr\"obner basis is zero, see Cox, Little, O'Shea \cite[p.~82, Corollary 2]{CLS}.
Clearly, $\tilde{P}$ is the remainder of $P$ with respect to this Gr\"obner basis, so the claim follows.
\end{proof}
Note that this theorem gives an effectively computable criterion to decide whether a Hilbert decomposition is induced by a Stanley decomposition.
\begin{rema}\label{rk:char2} Let us add some remarks.
\begin{enumerate}
\item We can say a little more about the structure of $\det A_{\ab}$.
	Endow the polynomial ring $\KK[Y_{i,j} \with (i,j) \in \tilde{I}]$ with a $\NN^{|I|}$-grading by setting $\deg Y_{i,j} := e_i$.
	It follows from the definition that the entries of a column of $A_{\ab}$ corresponding to $m_i$ are homogeneous of degree $e_i$.
	Hence $\det A_{\ab}$ is a homogeneous polynomial (with respect to this grading) and its degree is a $0/1$-vector.
	In particular, all monomials in $\det A_{a}$ are squarefree.
\item Consider the case that $\dim_\KK M_{\ab} \leq 1$ for all $\ab \in \ZZ^n$.
	Then, by the above remark, the single entry of $A_{\ab}$ is either zero or of the form $c Y_{i1}$ for some $i \in I$ and $c \in \KK\setminus\set{0}$.
	Hence the Hilbert decomposition is induced by a Stanley decomposition if and only if none of the $A_{\ab}$ is the zero matrix.
	So, in this case our Theorem \ref{thm:main} specializes to \cite[Proposition 2.8]{BKU}.
	In particular, the assumption that $\KK$ is infinite can be removed from \cite[Conjecture 5.1]{St} in the case that $M$ is an $R$-module with  $\dim_\KK M_{\ab} \leq 1$ for all $\ab \in \ZZ^n$. While this seems to be known, we could not find a precise reference for it.
\end{enumerate}
\end{rema}

	In general, the case distinction on the cardinality of the field cannot be removed.
	In fact, if $\KK$ is finite, then the condition that $\det A_{\ab} \neq 0$ for all $a$ is not sufficient.
	On the positive side, we know that the determinants of the $A_{\ab}$ are polynomials with squarefree monomials.
	Hence, if they are nonzero, then they do not vanish identically even over a finite field.
	On the other hand, it might not be possible to find values for the $Y_{i,j}$ such that all determinants are nonzero simultaneously.
	The following example shows this phenomenon.
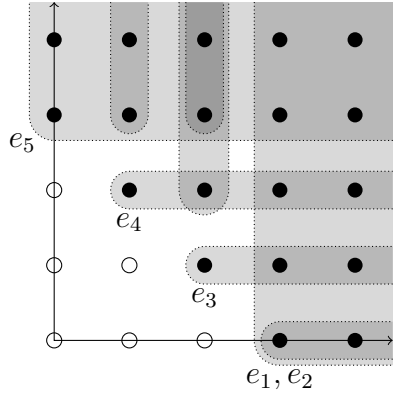
\begin{figure}[h]
\begin{tikzpicture}[scale=1, radius=0.1]
	\draw[<->] (4.5,0) -- (0,0) -- (0,4.5);

	\foreach \y in {0,1,2}
		\draw (0,\y) circle;
	\foreach \y in {3,4}
		\fill (0,\y) circle;

	\foreach \y in {0,1}
		\draw (1,\y) circle;
	\foreach \y in {2,3,4}
		\fill (1,\y) circle;

	\draw (2,0) circle;
	\foreach \y in {1,...,4}
		\fill (2,\y) circle;

	\foreach \x in {3,4}
	\foreach \y in {0,...,4}
		\fill (\x,\y) circle;

\begin{scope}[densely dotted, fill opacity=0.15]

	\begin{scope}
		\clip (-0.5,-0.5) -- (-0.5,4.5) -- (4.5,4.5) -- (4.5,-0.5) -- cycle;
		
		\filldraw (2.66,4.6) -- +(0,-4.6)
			arc[start angle=180, end angle=270, radius=0.33] -- +(1.6,0) -- (4.6,4.6) -- cycle;
			
		\filldraw (4.6, 2.66) -- +(-4.6,0)
			arc[start angle=270, end angle=180, radius=0.33] -- +(0,1.6) -- (4.6,4.6) -- cycle;
	\end{scope}

	\filldraw (1.66,4.5) -- +(0,-2.5)
		arc[start angle=180, end angle=360, radius=0.33] -- +(0,2.5);

	\filldraw (1.75,4.5) -- +(0,-1.5)
		arc[start angle=180, end angle=360, radius=0.25] -- +(0,1.5);

	\filldraw (0.75,4.5) -- +(0,-1.5)
		arc[start angle=180, end angle=360, radius=0.25] -- +(0,1.5);

	\filldraw (4.5, -0.25) -- +(-1.5,0)
		arc[start angle=270, end angle=90, radius=0.25] -- +(1.5,0);

	\filldraw (4.5, 0.75) -- +(-2.5,0)
		arc[start angle=270, end angle=90, radius=0.25] -- +(2.5,0);

	\filldraw (4.5, 1.75) -- +(-3.5,0)
		arc[start angle=270, end angle=90, radius=0.25] -- +(3.5,0);
\end{scope}

\node[below=0.24cm ] at (3,0) {$e_1, e_2$};
\node[below=0.16cm ] at (2,1) {$e_3$};
\node[below=0.16cm ] at (1,2) {$e_4$};
\node[below left=0.1cm ] at (0,3) {$e_5$};
\end{tikzpicture}
\caption{The Hilbert decomposition of Example \ref{ex:finite}.}
\label{fig:exfinite}
\end{figure}
\begin{ex}\label{ex:finite}
	Let $R = \KK[X_1, X_2]$ endowed with the standard $\mathbb{Z}^2$-grading.
	Consider the module $M$ with generators $e_1,\dots, e_5$ in degrees $(3,0),(3,0),(2,1),(1,2),(0,3)$ and relations
	\[ X_2e_1=X_1e_3,\quad  X_2^2e_2=X_1^2e_4,\quad X_2^3e_1+X_2^3e_2=X_1^3e_5.\]
	A Hilbert decomposition of $M$ is given by
	\begin{gather*}
	R_1 = \KK[X_1, X_2], \qquad R_2 = R_3 = R_4 = \KK[X_2],\\
	R_5 = \KK[X_1,X_2],\qquad R_6 = R_7 = R_8 = \KK[X_1],
	\end{gather*}
	and
	\[ \Sb_1 = \Sb_2 = (3,0), \Sb_3 = (2,1), \Sb_4 = (1,2), \Sb_5 = (0,3), \Sb_6 = (2,2), \Sb_7 = (2,3), \Sb_8 = (1,3), \]
	see Figure \ref{fig:exfinite}.
	In each degree there is a matrix $A_{\ab}$. Let us compute the matrices in the degrees $(3,1), (3,2)$ and $(3,3)$.
	For this we set $\tilde{m_1} = Y_{11} e_1 + Y_{12} e_2$, $\tilde{m}_3 = Y_3 e_3$, $\tilde{m}_4 = Y_4 e_4$ and $\tilde{m}_5 = Y_5 e_5$.
	Moreover, we choose $X_2^k e_1, X_2^k e_2$ as basis for $M_{(3,k)}$. With these conventions, we have the following matrices:
	\begin{equation}\label{eq:matrix}
		A_{(3,1)} = \begin{pmatrix} Y_{11} & Y_{3} \\ Y_{12} & 0 \end{pmatrix}
		\qquad
		A_{(3,2)} = \begin{pmatrix} Y_{11} & 0 \\ Y_{12} & Y_{4} \end{pmatrix}
		\qquad
		A_{(3,3)} = \begin{pmatrix} Y_{11} & Y_{5} \\ Y_{12} & Y_{5} \end{pmatrix}
	\end{equation}
	Their determinants are $Y_{12} Y_3$, $Y_{11} Y_4$, and $(Y_{11} + Y_{12} ) Y_5$.
	Hence over the finite field $\mathbb{F}_2$ with two elements, it is not possible to choose values $y_{11}, y_{12}$ for $Y_{11}, Y_{12}$, such that all three determinants are nonzero.
	Thus the Hilbert decomposition given above is induced by a Stanley decomposition over $\mathbb{F}_4$, say, but not over $\mathbb{F}_2$.
\end{ex}

For later use, we note the following consequence of Theorem \ref{thm:main}:
\begin{coro}\label{coro:localglobal}
	Assume that $\KK$ is infinite and let $(R_i, \Sb_i)_{i \in I}$ be a Hilbert decomposition of $M$.
	Then $(R_i, \Sb_i)_{i \in I}$ is induced by a Stanley decomposition if and only if for each $\ab \in \NN^n, \ab \preceq \gb$, there exists a linearly independent subset $(m_i)_{i \in \cC(\ab)}$ of $M_\ab$, such that $m_i \in \Xb^{\ab-\Sb_i} M_{\Sb_i}$ for $i \in \cC(\ab)$.
\end{coro}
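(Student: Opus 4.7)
The plan is to reduce the corollary to Theorem \ref{thm:main}(a) by observing that the hypothesis is precisely the local nonvanishing of each polynomial $\det A_{\ab}$, and then invoking the infinite field assumption to convert pointwise nonvanishing into global polynomial nonvanishing.

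For the easy direction ($\Longrightarrow$), I would start from a Stanley decomposition $(R_i, m_i)_{i \in I}$ inducing the given Hilbert decomposition and apply Proposition \ref{hilbert:stanley}: for every $\ab \in [\nb, \gb]$ the set $\{\Xb^{\ab - \Sb_i} m_i : i \in \cC(\ab)\}$ is $\KK$-linearly independent in $M_{\ab}$, and each element lies in $\Xb^{\ab - \Sb_i} M_{\Sb_i}$ by construction. So this direction is essentially a restatement.

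For the converse ($\Longleftarrow$), I would fix $\ab \in [\nb, \gb]$ and use the hypothesis to pick linearly independent $m_i^{(\ab)} \in \Xb^{\ab - \Sb_i} M_{\Sb_i}$ for $i \in \cC(\ab)$. Writing $m_i^{(\ab)} = \Xb^{\ab - \Sb_i} n_i^{(\ab)}$ with $n_i^{(\ab)} \in M_{\Sb_i}$ and expanding $n_i^{(\ab)} = \sum_j y_{i,j}^{(\ab)} b_{\Sb_i, j}$ in the chosen basis of $M_{\Sb_i}$ gives an evaluation of the variables $Y_{i,j}$ (for $i \in \cC(\ab)$) under which the columns of $A_{\ab}$ become the coordinate vectors of the $m_i^{(\ab)}$ in the basis of $M_{\ab}$. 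By construction these columns are linearly independent, so $\det A_{\ab}$ does not vanish at this particular evaluation, and in particular $\det A_{\ab}$ is not the zero polynomial. Since this holds for every $\ab \in [\nb, \gb]$, Theorem \ref{thm:main}(a) applies (this is where infiniteness of $\KK$ enters, allowing the local witnesses at each $\ab$ to be combined into a single global Stanley decomposition), and we conclude that the Hilbert decomposition is induced by a Stanley decomposition.

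The only subtlety I anticipate is bookkeeping: the variables $Y_{i,j}$ appearing in $A_{\ab}$ are only those with $i \in \cC(\ab)$, so the evaluation constructed from the hypothesis for a given $\ab$ suffices to witness nonvanishing of that single $\det A_{\ab}$, and different $\ab$'s may require entirely different evaluations—but this is harmless since Theorem \ref{thm:main}(a) only requires each $\det A_{\ab}$ to be nonzero as a polynomial.
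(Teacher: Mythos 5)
Your proof is correct and follows the same route as the paper, which simply observes that the stated condition is equivalent to the non-vanishing of each $\det A_{\ab}$ and then invokes Theorem \ref{thm:main}(a); you have merely spelled out both directions (via Proposition \ref{hilbert:stanley} and via evaluating the $Y_{i,j}$ at the local witnesses) in more detail.
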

\begin{proof}
	The condition is clearly equivalent to the non-vanishing of the determinants of $A_\ab$ for $\ab \in [\nb,\gb]$.
\end{proof}


\section{An algorithm for computing the Stanley depth of a module}\label{Algo}
In this section we describe how Theorem \ref{thm:main} can be used to effectively compute the Stanley depth of a given (finitely generated $\ZZ^n$-graded) module.
We assume (as in Section \ref{AlgStanley}) that $M$ is a fixed finitely generated $\ZZ^n$-graded $R$-module and we fix $\gb \in \NN^n$ such that $M$ is positively $\gb$-determined.

By Proposition \ref{prop:gdet}, one only needs to consider $\gb$-determined Stanley decompositions.
Hence the Stanley depth of $M$  can be expressed as
\newcommand{\Df}[1]{\mathfrak{D}(#1)}
\newcommand{\df}{\mathfrak{D}}
\[
\sdep M = \max \left\{\depth \df \with
\begin{aligned}
	&\df \text{ is a $\gb$-determined Hilbert decomposition of } M\\
	& \text{which is induced by a Stanley decomposition.}
\end{aligned}
\right\}.
\]

A key remark is that there are only \emph{finitely many} $\gb$-determined Hilbert decompositions of $M$ for a fixed $\gb$.
To actually compute the Stanley depth using this formula, one needs to
\begin{enumerate}
	\item iterate over all $\gb$-determined Hilbert decompositions $\df$ of $M$; and
	\item decide whether $\df$ is induced by a Stanley decomposition of $M$.
\end{enumerate}
\medskip

An algorithm for the first task was presented in \cite[Algorithm 1]{IZ}.
In this section we shall follow this approach and we modify \cite[Algorithm 1]{IZ}, so that it may be used for computing the Stanley depth.
We would like to remark at this point that an alternative approach for this first task is to use a description of
the set of $\gb$-determined Hilbert decompositions as the set of lattice points in a certain polytope. We give a precise description of this polytope later, in Proposition \ref{prop:dioph}.
So, in fact one may use standard software to enumerate these points, for example \texttt{SCIP} \cite{SCIP} or \texttt{Normaliz} \cite{N1, N2}.
This idea for enumerating Hilbert decompositions was originally suggested by W. Bruns and described in Katth\"an \cite[Section 7.2.1]{K}.

For the second task, we suggest to apply Theorem \ref{thm:main}. In order to make this effective, one has to choose bases for the components $M_{\ab}$ of $M$.
One possibility is to choose standard monomials with respect to some Gr\"obner bases, cf. Eisenbud~\cite[Theorem 15.3]{Eis}.
The computation of the matrices $A_{\ab}$ and their determinant can then be done using standard algorithms from constructive module theory.
We refer to Chapter 15 of \cite{Eis} or Chapter 10.4 of Becker and Weispfenning~\cite{BW}.
A possible alternative for the second task is provided by Theorem \ref{thm:stanleypolytop}.

\begin{rema}\label{rem:finiteinfinite}
	For the case distinction of Theorem \ref{thm:main}, one has to decide whether the field is finite or not.
	We describe one way to avoid this.
	With the notation introduced in Construction \ref{const}, let
		\[P=P(\mathfrak{D}) := \prod_{\ab \in [\nb,\gb]} \det A_\ab \in \KK[Y_{i,j} \with (i,j) \in \tilde{I}].\]
	If the field is finite, one has to reduce $P$ to $\tilde{P}$ as described in Theorem \ref{thm:main}, while in the infinite case one can directly use $P$.
	But even in the finite case, $P$ equals $\tilde{P}$ if the largest exponent in $P$ does not exceed the cardinality of $\KK$.
	Note that this is trivially true if $\KK$ is infinite, so we can base the case distinction on the question whether the largest exponent in $P$ exceeds the cardinality of $\KK$.
\end{rema}

\subsection{Enumerating \texorpdfstring{$\gb$}{g}-determined Hilbert decompositions via Hilbert partitions}
In the following, we present a modified version of \cite[Algorithm 1]{IZ} for the computation of the Stanley depth, see Algorithm \ref{algo:hdep} below.
Hence we obtain an algorithm for the computation of the Stanley depth of $M$.

As the algorithm in \cite{IZ} is formulated in terms of Hilbert \emph{partitions}, we recall the necessary definitions from \cite{IJ}.

Let the polynomial
\[
H_M(t)_{\preceq \gb}:=\sum_{0\preceq  \ab \preceq \gb} (\dim_\KK M_{\ab}) t^\ab
\]
be the truncated $\ZZ^n$-graded Hilbert series of $M$.
For $\ab,\bb \in \ZZ^n$ such that $\ab\preceq \bb$, we set
\[
Q[\ab,\bb](t):=\sum_{\ab \preceq \cb \preceq \bb } t^{\cb}
\]
and call it the \emph{polynomial induced by the interval} $[\ab,\bb]$.

\begin{defi}[\cite{IJ}]\label{defi:Hpartition}
We define a \emph{Hilbert partition} of the polynomial $H_M(t)_{\preceq \gb}$ to be a finite sum
\[
\mathfrak{P}: H_M(t)_{\preceq \gb}=\sum_{i \in I} Q[\ab^i,\bb^i](t)
\]
of polynomials induced by the intervals $[\ab^i,\bb^i]$.
\end{defi}
Note that there are only finitely many Hilbert partitions of $H_M(t)_{\preceq \gb}$. On one hand,
every Hilbert partition $\mathfrak{P}$ induces a $\gb$-determined Hilbert decomposition $\mathfrak{D}(\mathfrak{P})$ by the following construction.
\begin{construction}[\cite{IJ}]
\newcommand{\Gc}{\mathcal{G}}
Let $\mathfrak{P}: \sum_{i \in I} Q[\ab^i,\bb^i](t)$ be a Hilbert partition of $H_M(t)_{\preceq \gb}$.
For $\nb \preceq \ab \preceq \bb \preceq \gb$ we set
\[ \Gc[\ab,\bb] := \set{\cb \in [\ab,\bb] \with c_j = a_j \text{ for all } j \text { with } b_j = g_j}.\]
Further, for $\bb \preceq \gb$ let $Z_\bb := \set{j \in [n] \with b_j = g_j}$, $\rho(\bb)=|Z_\bb|$ and let $\KK[Z_{\bb}] := \KK[X_j \with j \in Z_\bb]$.
Then we define
\[
\mathfrak{D}(\mathfrak{P}): M\iso \bigoplus_{i=1}^r\Big(\bigoplus_{\cb\in \mathcal{G}[\ab^i,\bb^i]} K[Z_{\bb^i}](-\cb)\Big).
\]
\end{construction}
On the other hand, by Proposition \ref{lem:gstandart}, each $\gb$-determined Hilbert decomposition $(\KK[Z_i], \Sb_i)_{i \in I}$ is induced by the Hilbert partition
\[
\mathfrak{P}: H_M(t)_{\preceq \gb} = \sum_{i \in I} Q[\Sb_i,\bb^i](t),
\]
where
\[
(\bb^i)_j = \begin{cases}
(\Sb_i)_j &\text{ if } j \notin \ZZ_i,\\
g_j &\text{ if } j \in \ZZ_i.\\
\end{cases}
\]

Hence the $\gb$-determined Hilbert decompositions are exactly those Hilbert decompositions which are induced by a Hilbert partition.

Our modified version \cite[Algorithm 1]{IZ} for the computation of the Stanley depth is presented in Algorithm \ref{algo:hdep}.

\allowdisplaybreaks
\begin{algorithm}
\SetKwFunction{FindElementsToCover}{{\bf FindElementsToCover}}
\SetKwFunction{FindPossibleCovers}{{\bf FindPossibleCovers}}
\SetKwFunction{Beg}{begin}
\SetKwFunction{En}{end}
\SetKwFunction{size}{size}
\SetKwFunction{AddInterval}{{\bf AddInterval}}
\SetKwFunction{ComputeDeterminantsProduct}{{\bf ComputeDeterminantsProduct}}
\SetKwFunction{CheckStanleyDepth}{{\bf CheckStanleyDepth}}
\SetKwFunction{Reduce}{{\bf Reduce}}
\SetKwData{Boolean}{Boolean}
\SetKwData{Container}{Container}
\SetKwData{Polynomial}{Polynomial}
\SetKwData{Integer}{Integer}
\caption{Function that checks if $\sdep\ge s$ recursively} \label{algo:hdep}

\KwData{$\gb\in \NN^n$, $s \in \NN$, an $R$-module $M$, a polynomial $H(t)=H_M(t)_{\preceq \gb}\in \NN[t_1,...,t_n]$, a \Container $\mathfrak{P}$ and $q \in \NN \cup\set{\infty}$}
\KwResult{{\it true} if $\sdep M \geq s$}
\Boolean \CheckStanleyDepth{$\gb,s,M,P,\mathfrak{P},q$}\;
\Begin{
    \If {$H\notin \NN[t_1,...,t_n]$}{\Return{false}\;}
    \Container $E=$\FindElementsToCover{$\gb,s,H$}\;
\nl \If {$\size{E}=0$}{
\nl \Polynomial $P(Y)$:=\ComputeDeterminantsProduct{$\gb,M,\mathfrak{P}$}\;
\nl  P=\Reduce{$P,q$}\;
\nl \If {$P \neq 0$}{\Return{true}\;}
	\Return{false}\;}
    \Else{
    \For { i=\Beg{E} \KwTo i=\En{E} }{
    \Container $C[i]$:=\FindPossibleCovers{$\gb,s,H,E[i]$}\;
    \If {\size{$C[i]$}$=0$}{\Return{false}\;}
    \For { j=\Beg{$C[i]$} \KwTo j=\En{$C[i]$} }{
    \Polynomial $\tilde{H}(t)=H(t)-Q[E[i],C[i][j]](t)$\;
\nl \Container $\tilde{\mathfrak{P}}$:=\AddInterval{$\mathfrak{P},Q[E[i],C[i][j]](t)$}\;
    \If{\CheckStanleyDepth{$\gb,s,M,\tilde{H},\tilde{\mathfrak{P}}$}=true}{\Return{true}\;}
}
}
    \Return{false}\;
}
}
\end{algorithm}

\medskip

The differences from \cite[Algorithm 1]{IZ} appear at lines 1--4, 5, and in the usage of the extra parameters $M$, $\mathfrak{P}$, and $q$. The container $\mathfrak{P}$ is used for storing the intervals in the Hilbert partitions that have been computed, and it can be initialized empty.
The $R$-module structure of $M$ is needed for computing the matrices $A_{\ab}$ (for all $\ab \in [\nb,\gb]$). Moreover, $q$ is the cardinality of the field, which is needed for the reduction.
Assuming that the reader is familiar with \cite[Algorithm 1]{IZ},  we describe below the new key steps of the algorithm:

\begin{itemize}
	\item line~1. ~If $E$ is empty, then we have computed a complete Hilbert partition in $\mathfrak{P}$ (since there are no elements in $E$ to cover). Then we have to check using Theorem \ref{thm:main} whether the Hilbert decomposition $\mathfrak{D}(\mathfrak{P})$ is induced by a Stanley partition.
    \item line~2. The function {\bf ComputeDeterminantsProduct} computes $P(\mathfrak{D}(\mathfrak{P}))$ as in Remark \ref{rem:finiteinfinite}. Since $P$ depends on the $R$-module structure of $M$, we have to pass it as a parameter.
    \item line~3. Here we compute the reduction of $P$ with respect to the cardinality $q$ of the field.
    		We point out that we can skip this step if $\KK$ is infinite.
    \item line~4. We apply Theorem \ref{thm:main}, so we check whether $P \neq 0$. If the answer is positive, then we are done. We have reached a good leaf of the searching tree.
	\item lines~5. The child $\tilde{P}$ is generated here and further investigated in the recursive call.
\end{itemize}


\section{Applications and Examples}\label{ApplicationsExamples}
In this section, we present several applications of Theorem \ref{thm:main}.
To simplify the discussion we assume throughout this section that $|\KK|=\infty$.

\subsection{Stanley depth of syzygies}\label{subsection:counterex}
In this subsection, we present an example of an $R$-module $M$, such that $\sdep M > \sdep \Syz_R^1(M)$.
This answers Question 63 in \cite{H} to the negative.
Let us describe the idea of the construction.
It was observed in \cite{IZ} that there are modules $M$ such that $\sdep M < \sdep M \oplus R$.
But it always holds that $\Syz_R^1(M \oplus R) = \Syz_R^1(M)$.
Hence, we will look for a module whose Stanley depth increases sufficiently under adding copies of the ring, to obtain
\[
\sdep \Syz_R^1(M)=\sdep \Syz_R^1(M\oplus R^{a}) < \sdep M\oplus R^{a}.
\]

In fact, it is already sufficient to choose $M = \mm$, the maximal ideal in some polynomial ring.
It follows from \cite[Proposition 3.6]{BKU} that
\[\sdep \Syz_R^1(\mm) \leq n - \lceil\frac{n-2}{3}\rceil,¸\]
where $n$ is the number of variables. As $\sdep \mm = \lceil\frac{n}{2}\rceil$, we see that in order to use this upper bound,
we need that the Stanley depth of $\mm$ increases at least by two after adding any number of copies of the ring.
The smallest $n$ where this is possible is six. Indeed, an easy computation following Popescu \cite{PopescuA} shows that the \emph{$\ZZ$-graded Hilbert depth} of $\mm_6 \oplus R^9$ equals $5$,
while $\sdep \Syz_R^1(\mm_6\oplus R^9) = \sdep \Syz_R^1(\mm_6) \leq 6 - \lceil\frac{6-2}{3}\rceil = 4$ (see Uliczka \cite{U} for details about the $\ZZ$-graded Hilbert depth). So $M = \mm_6 \oplus R^9$ is our candidate for a counterexample.

We need to compute a Hilbert decomposition $\mathfrak{D}$ of $M$ with $\depth \mathfrak{D}=5$.
Unfortunately, this module is already too large for the CoCoA implementation of the Algorithm in \cite{IZ}.
By Proposition \ref{lem:gstandart}, it is enough to search for a $\gb$-Hilbert decomposition, where $\gb=(1,1,1,1,1,1)$.
These decompositions are described by a system of linear Diophantine inequalities (see Section \ref{ssec:rado} for details) and we can solve the system with the software \texttt{SCIP} \cite{SCIP}.
This yields the Hilbert decomposition of $M$, which is summarized in Table \ref{tab:largehdec}. There, an entry such as $2\times[001111 ,101111]$ is to be interpreted as two copies of the vector space $$\KK[X_1,X_3,X_4,X_5,X_6](0,0,-1,-1,-1,-1)$$ in the Hilbert decomposition.

\begin{table}[th]
\begin{tabular}{rrr}
  $4\times[000000 ,111110]$ & $2\times[000000 ,111101]$ & $3\times[000000 ,111011]$ \\{}
  		 $[000001 ,111101]$ & 		 $[000001 ,111011]$ & 		 $[000001 ,110111]$ \\{}
  		 $[000001 ,101111]$ & 		 $[000001 ,011111]$ & 		 $[000010 ,110111]$ \\{}
  		 $[000010 ,101111]$ & 		 $[000010 ,011111]$ & 		 $[000100 ,111101]$ \\{}
  		 $[000100 ,110111]$ & 		 $[000100 ,101111]$ & 		 $[000100 ,011111]$ \\{}
  		 $[001000 ,101111]$ & 		 $[010000 ,011111]$ & 		 $[100000 ,110111]$ \\{}
  		 $[000111 ,110111]$ & 		 $[001011 ,111011]$ & 		 $[001101 ,101111]$ \\{}
  		 $[001110 ,111110]$ & 		 $[010011 ,011111]$ & 		 $[010101 ,011111]$ \\{}
  		 $[010110 ,111110]$ & 		 $[011001 ,111101]$ & 		 $[011010 ,011111]$ \\{}
  		 $[011100 ,111101]$ & 		 $[100011 ,111011]$ & 		 $[100101 ,110111]$ \\{}
  		 $[100110 ,101111]$ & 		 $[101001 ,111101]$ & 		 $[101010 ,111110]$ \\{}
  		 $[101100 ,111101]$ & 		 $[110001 ,110111]$ & 		 $[110010 ,111110]$ \\{}
  		 $[110100 ,110111]$ & 		 $[111000 ,111011]$ & $2\times[001111 ,101111]$ \\{}
  		 $[101011 ,111011]$ & 		 $[110011 ,111011]$ & 		 $[111100 ,111110]$ \\{}
  $3\times[011111 ,011111]$ & $2\times[101111 ,101111]$ & $2\times[110111 ,110111]$ \\{}
  		 $[111011 ,111011]$ & $2\times[111101 ,111101]$ & 		 $[111110 ,111110]$\\{}
 $10\times[111111 ,111111]$ &&
\end{tabular}
\medskip
\caption{A Hilbert decomposition $\mathfrak{D}$ of $M$ with $\depth \mathfrak{D}=5$.}\label{tab:largehdec}
\end{table}

In particular, the Hilbert depth of $M$ equals $5$.
It remains to show that this Hilbert decomposition is induced by a Stanley decomposition of $M$.
Then we can conclude that
\[\sdep M = 5 > 4 \geq \sdep \Syz_R^1(M).\]

For this we prove the following general result:
\begin{propo}\label{prop:max}
	Let $\mm \subset R$ be the maximal monomial ideal. Assume that $\KK$ is infinite. Then for all $\alpha,\beta \in \NN$ it holds that
	\[ \hdep \mm^{\oplus \alpha} \oplus R^{\oplus \beta} = \sdep \mm^{\oplus \alpha} \oplus R^{\oplus \beta}. \]
	In fact, every Hilbert decomposition of this module is induced by a Stanley decomposition.
\end{propo}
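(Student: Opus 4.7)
The plan is to apply Theorem \ref{thm:main}(a) to an arbitrary Hilbert decomposition $\mathfrak{D} = (\KK[Z_i], \Sb_i)_{i \in I}$ of $M = \mm^{\oplus \alpha} \oplus R^{\oplus \beta}$ (enlarging $\gb$ if necessary so that $\mathfrak{D}$ is $\gb$-determined). The goal is to show that $\det A_\ab \neq 0$ as a polynomial in the $Y_{i,j}$ for every $\ab \in [\nb, \gb]$.

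The first step is to pick bases for the $M_\ab$ that respect the direct-sum structure: let $e_1, \dotsc, e_\alpha$ and $f_1, \dotsc, f_\beta$ denote the standard generators of the $\mm^{\oplus \alpha}$ and $R^{\oplus \beta}$ summands, and take $\set{\Xb^\ab e_k, \Xb^\ab f_l}$ as a basis of $M_\ab$ for $\ab \neq \nb$, and $\set{f_l}$ for $M_\nb$. With these bases, multiplication by $\Xb^{\ab - \Sb_i}$ sends each chosen basis vector of $M_{\Sb_i}$ to a basis vector of $M_\ab$, so each entry of $A_\ab$ in Construction \ref{const} is either $0$ or a single indeterminate $Y_{i,j}$. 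Setting $I_0 := \set{i \in I \with \Sb_i = \nb}$ and $I_+ := I \setminus I_0$, the column of $A_\ab$ indexed by $i \in I_0$ has zeros in its first $\alpha$ rows (the "$e$"-rows) and $\beta$ distinct indeterminate entries in the remaining rows, while the column indexed by $i \in I_+$ consists of $\alpha + \beta$ distinct indeterminate entries.

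Because all non-zero entries of $A_\ab$ are pairwise distinct indeterminates, distinct permutations in the Leibniz expansion of $\det A_\ab$ contribute distinct monomials, and $\det A_\ab \neq 0$ reduces to the purely combinatorial question of whether some permutation avoids the zero block. For $\ab = \nb$ this is immediate, since $A_\nb$ is a $\beta \times \beta$ matrix of distinct indeterminates. For $\ab \neq \nb$, the required combinatorial input is the pigeonhole bound $|\cC(\ab) \cap I_+| \geq \alpha$; this follows because comparing the Hilbert series of $M$ and of $\mathfrak{D}$ at $\nb$ forces $|I_0| = \dim_\KK M_\nb = \beta$, while $|\cC(\ab)| = \dim_\KK M_\ab = \alpha + \beta$. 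One then bijectively matches $\alpha$ columns from $\cC(\ab) \cap I_+$ with the first $\alpha$ rows, and distributes the remaining $\beta$ columns (in any order) over the last $\beta$ rows, selecting only nonzero entries. The only subtlety is verifying that with these bases the entries of $A_\ab$ really are single $Y$-variables rather than linear combinations, which is what eliminates cancellations in the Leibniz expansion; once this is in place, the rest is a counting argument and presents no genuine obstacle.
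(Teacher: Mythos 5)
Your proposal is correct and follows essentially the same route as the paper's proof: the same choice of bases $\Xb^\ab e_k, \Xb^\ab f_l$, the same observation that with generic elements all nonzero entries of $A_\ab$ are pairwise distinct indeterminates (so no cancellation in the Leibniz expansion), the zero block confined to the $e$-rows of the columns with $\Sb_i = \nb$, and the same pigeonhole count $|\cC(\ab)\cap I_0| \le \beta$. The only cosmetic difference is that the paper exhibits the required zero-avoiding permutation explicitly as the antidiagonal after placing the first-type columns first, whereas you argue its existence by the matching/counting argument.
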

\begin{proof}
	Let $M := \mm^{\oplus \alpha} \oplus R^{\oplus \beta}$.
	Further, let $e_1, \dotsc, e_\alpha, f_1, \dotsc, f_\beta$ be the natural set of generators of $R^{\oplus \alpha} \oplus R^{\oplus \beta}$ and consider $M$ as a submodule of this module.
	
	In every nonzero multidegree $\ab \in \NN^n$, the elements $\Xb^\ab e_1, \dotsc, \Xb^\ab e_\alpha, \Xb^\ab f_1, \dotsc, \Xb^\ab f_\beta$ form a vector space basis of $M_{\ab}$.
	Moreover, a vector space basis of $M_\nb$ is given by $f_1, \dotsc, f_\beta$.

	Now consider a Hilbert decomposition $(R_i, \Sb_i)_{i \in I}$ of $M$.
	We distinguish two kinds of summands in this decomposition.
	First, there are those $i$ where $\Sb_i = 0$.
	Here we set $m_i := \sum_j Z_{ij} f_j$ and we call these generators of the first type.
	As we start from a Hilbert decomposition, it is clear that there are exactly $\dim M_\nb = \beta$ generators of the first type.
	Further, for $i$ with ${\Sb_i} \neq 0$ we set $m_i := \sum_j Y_{ij} \Xb^{\Sb_i} e_j + \sum_j Z_{ij} \Xb^{\Sb_i} f_j$.
	We call these the generators of the second type.

	Next we consider the corresponding matrices as in Theorem \ref{thm:main}.
	In the multidegree $\nb$, it is easy to see that $A_\nb$ is a generic (square) matrix in the variables $Z_{ij}$, and thus its determinant is non-zero.
	So consider a multidegree $\ab \neq \nb$.
	Both types of generators can contribute to $M_{\ab}$, so the matrix $A_{\ab}$ has the following shape:

	\[ \begin{tikzpicture}
	
	\matrix[style={
		    matrix of math nodes, nodes in empty cells,
		    every node/.append style={text width=0.55cm,align=center,minimum height=5ex},
		    left delimiter=(, right delimiter=),
	}] (mat) {
	  & & & \\
	  & & & \\
	  & & & \\
	  & & & \\
	};
	\draw (mat-2-1.south west) -- (mat-2-4.south east);
	\draw (mat-1-2.north east) -- (mat-4-2.south east);
	\node[font=\large] at (mat-1-1.south east) {$0$};
	\node[font=\large] at (mat-1-3.south east) {$Y_{**}$};
	\node[font=\large] at (mat-3-1.south east) {$Z_{**}$};
	\node[font=\large] at (mat-3-3.south east) {$Z_{**}$};
	
	\draw[decoration={brace,raise=13pt},decorate]
	  (mat-1-4.north east) -- node[right=15pt] {$\alpha$} (mat-2-4.south east);
	\draw[decoration={brace,raise=13pt},decorate]
	  (mat-3-4.north east) -- node[right=15pt] {$\beta$} (mat-4-4.south east);
	\draw[decoration={brace,raise=5pt,mirror},decorate]
	  (mat-4-1.south west) -- node[below=8pt] {$u$} (mat-4-2.south east);
	
	\end{tikzpicture} \]
	Here $u$ stands for the number of generators of the first type contributing to the multidegree $\ab$.
	Note that every entry on the antidiagonal of $A_{\ab}$ is non-zero.
	Indeed, because the sum of the indices of the matrix entries is $\alpha + \beta + 1$, while for every entry of the zero-block this sum is at most $\alpha + u \leq \alpha + \beta$.
	Hence the antidiagonal gives a non-zero monomial in the Leibniz expansion of the determinant, and as all non-zero entries of the matrix are different variables, therefore cancelation cannot occur. Thus the determinant is non-zero and the claim follows from Theorem \ref{thm:main}.
\end{proof}

\begin{rema}
\begin{enumerate}
\item
Proposition \ref{prop:max} does not hold as stated for arbitrary ideals.
Consider the case $R = \KK[X_1,X_2]$ and $M = (X_1 X_2) \oplus R$.
Then $\KK \oplus X_1\KK[X_1,X_2] \oplus X_2\KK[X_1,X_2]$ is a Hilbert decomposition of $M$ that is not induced by a Stanley decomposition.

\item The result also does not hold if one adds shifted copies of the ring. Consider $R = \KK[X_1,X_2]$ and $M = (X_1,X_2) \oplus R(-1,-1)$. Then $X_1 \KK[X_1,X_2] \oplus X_2 \KK[X_1,X_2]$ is a Hilbert decomposition of $M$ which is not induced by a Stanley decomposition.
In fact, by adding shifted copies of the ring, one can always obtain a Hilbert decomposition of Hilbert depth $n$ for an arbitrary graded module $M$. For this, consider a finite free resolution of $M$,
\[ 0 \rightarrow F_p \rightarrow F_{p-1} \rightarrow \dotsb \rightarrow F_0 \rightarrow M \rightarrow 0. \]
Then the sum of the Hilbert series of the even modules equals the Hilbert series of $M$ plus the sum of  the Hilbert series of the odd modules, so the former is a Hilbert decomposition of the latter.
\end{enumerate}
\end{rema}

Based on several examples, we conjecture the following strengthening of Proposition \ref{prop:max}:
\begin{conjecture}
For every number of variables and any $\alpha,\beta \in \NN$, the $\ZZ$-graded Hilbert depth \cite{U} and the Stanley depth of $\mm^{\oplus \alpha} \oplus R^{\beta}$ coincide.
\end{conjecture}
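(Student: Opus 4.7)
Set $M_{\alpha,\beta} := \mm^{\oplus \alpha} \oplus R^{\oplus \beta}$. Proposition~\ref{prop:max} already gives $\sdep M_{\alpha,\beta} = \hdep M_{\alpha,\beta}$, the multigraded Hilbert depth. Every multigraded Hilbert decomposition is in particular a $\ZZ$-graded one of the same depth, so $\hdep M_{\alpha,\beta} \leq \hdep_\ZZ M_{\alpha,\beta}$ trivially. Hence the conjecture reduces to the reverse inequality $\hdep_\ZZ M_{\alpha,\beta} \leq \hdep M_{\alpha,\beta}$, i.e.\ to showing that every $\ZZ$-graded Hilbert decomposition of $M_{\alpha,\beta}$ can be refined to a $\ZZ^n$-graded Hilbert decomposition of at least the same depth.

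My plan is to tackle this refinement problem by exploiting the full $S_n$-symmetry of $M_{\alpha,\beta}$. Observe first that $M_{\alpha,\beta}$ is positively $\gb$-determined for $\gb = (1,\ldots,1)$, so by Proposition~\ref{prop:gdet} one may restrict attention to $\gb$-determined multigraded decompositions; the shifts then lie in $\{0,1\}^n$ and the combinatorial data are the multiplicities $\mu_{\mathbf{s},Z}$ of summands $\KK[Z](-\mathbf{s})$ with $\supp\mathbf{s}\subseteq Z\subseteq[n]$. Following the approach sketched in Section~\ref{ssec:rado}, the existence of a $\ZZ^n$-graded Hilbert decomposition of depth at least $d$ is equivalent to the existence of an $\NN$-valued solution of an explicit system of linear Diophantine equations. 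The key simplification in the present case is that the multigraded Hilbert function of $M_{\alpha,\beta}$ at $\ab \in \{0,1\}^n$ depends only on whether $\ab = \nb$: it equals $\beta$ at $\nb$ and $\alpha+\beta$ at every other vertex of the cube. Consequently this Diophantine system is $S_n$-invariant, and its solution set forms an $S_n$-invariant rational polytope inside an affine subspace.

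The first step of the argument is to convert a $\ZZ$-graded decomposition of depth $d$ into a \emph{symmetric rational} solution of the multigraded Diophantine system of the same depth, by distributing each multiplicity $\mu_{s,k}$ of summands $\KK[Z](-s)$ with $|Z|=k$ uniformly over the pairs $(\mathbf{s},Z)$ with $|\mathbf{s}|=s$ and $\supp\mathbf{s}\subseteq Z$, $|Z|=k$. This averaging will be routine once the group action on the Diophantine system is set up. The second step, which I expect to be the main obstacle, is \emph{integrality}: one must modify this symmetric rational solution to an $\NN$-valued one without lowering the depth. A natural route is a polymatroid exchange argument, using the fact (emphasized in the introduction) that the feasible polytope is an intersection of an affine subspace with the positive orthant and finitely many polymatroids, and polymatroids are integral. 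Alternatively, induction on $n$ looks promising: peeling off the variable $X_n$ should reduce to the analogous statement about $M_{\alpha',\beta'}$ over $\KK[X_1,\dotsc,X_{n-1}]$ for suitable $\alpha',\beta'\in\NN$, preserving the symmetric structure at each step.

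The hard part will be this integrality question. Averaging produces a symmetric rational solution essentially for free once the symmetry of the Diophantine system is in place, but converting a symmetric rational point of a polytope into a symmetric lattice point is subtle in general. The hope is that the extreme regularity of the multigraded Hilbert series of $M_{\alpha,\beta}$ forces the vertices of the feasible polytope (or at least some face containing a symmetric point) to be lattice points, perhaps via the polymatroid structure. If this fails, a direct constructive approach — explicitly exhibiting for each achievable $\ZZ$-graded depth $d$ a $\gb$-determined multigraded Hilbert decomposition of depth $d$, block by block in the levels of $\{0,1\}^n$ — would be the fallback, and this too seems feasible given the small number of distinct values taken by the multigraded Hilbert function of $M_{\alpha,\beta}$.
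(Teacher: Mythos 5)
This statement is not proved in the paper at all: it is stated as an open conjecture (``Based on several examples, we conjecture\dots''), a strengthening of Proposition~\ref{prop:max} that the authors explicitly leave unresolved. Your proposal is likewise not a proof but a research plan, and the gap you yourself flag -- the integrality step -- is precisely the whole difficulty. Averaging a $\ZZ$-graded decomposition over the $S_n$-action only yields a \emph{rational} symmetric point of the feasible region, and the polymatroid structure does not rescue you: Theorem~\ref{thm:stanleypolytop} describes the region as an intersection of the affine slice $\mathscr{H}$ with \emph{finitely many} polymatroids, and intersections of integral polytopes (even of two polymatroids with an affine subspace) are not integral in general, so ``polymatroids are integral'' does not imply that a symmetric rational point can be rounded to a lattice point without dropping the depth. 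The fallback of an explicit block-by-block construction is exactly the open problem restated, not an argument.

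There is also a gap earlier in your reduction. A $\ZZ$-graded Hilbert decomposition attaining $\hdep_\ZZ$ in the sense of Uliczka decomposes the $\ZZ$-graded Hilbert series into summands $t^{s}/(1-t)^{k}$ with no a priori bound relating $s$ and $k$; a summand with $s>k$ (or with $s>n$) has no $\gb$-determined multigraded counterpart for $\gb=(1,\dotsc,1)$, since in the multigraded setting $\supp(\Sb)\subseteq Z$ forces $|Z|\ge|\Sb|$ and $\Sb\in\{0,1\}^n$. So before any averaging you would need a reduction showing that $\hdep_\ZZ$ of $\mm^{\oplus\alpha}\oplus R^{\oplus\beta}$ is always attained by a decomposition whose summands are coarsenings of admissible blocks $\KK[Z](-\Sb)$; this is not provided by anything in the paper (Proposition~\ref{prop:gdet} concerns the multigraded, not the $\ZZ$-graded, depth). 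In short: the correct easy inequality is $\sdep=\hdep\le\hdep_\ZZ$ via Proposition~\ref{prop:max} and coarsening, but the reverse inequality -- the content of the conjecture -- remains unproved both in the paper and in your proposal.
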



\subsection{The set of \texorpdfstring{$\gb$}{g}-determined Stanley decompositions}\label{ssec:rado}
In this section, we show that the set of all $\gb$-determined Stanley decompositions can be described by a (large) system of linear Diophantine inequalities, or, equivalently, by the set of $\ZZ^n$-lattice points inside a polytope $\mathscr{P}$.

Consider a finitely generated $\NN^n$-graded $R$-module $M$ which is $\gb$-determined for some $\gb \in \NN^n$.
Let
\[ \Omega := \set{ (\KK[Z], \ab) \with \ab \in \NN^n, \ab \preceq \gb, Z \subseteq [n], \set{j \with g_j = a_j} \subseteq Z } \]
be the set of all possible building blocks for a $\gb$-determined Hilbert decomposition of $M$ (according to Proposition \ref{lem:gstandart}).

We write $\NN^\Omega$ for the free commutative monoid with generators $\set{\eb_\omega \with \omega \in \Omega}$.
A $\gb$-determined Hilbert decomposition $(R_i, \Sb_i)_{i\in I}$ of $M$ can then be identified with the element $\sum_{i\in I} e_{(R_i, \Sb_i)} \in \NN^\Omega$.
For a vector $u \in \NN^\Omega$, we write $u(\ab,Z)$ for the component of $u$ corresponding to $Z \subseteq [n]$ and $\ab \in [\nb,\gb]$.
Note that for $(\KK[Z], \bb) \in \Omega$ and $\ab \succeq \bb$, it holds that $(\KK[Z](-\bb))_\ab \neq 0$ if and only if $\supp(\ab-\bb) \subseteq Z$. Now, $\gb$-determined Hilbert decompositions may be characterized easily.
\begin{propo}\label{prop:dioph}
A vector $u \in \NN^\Omega$ corresponds to a $\gb$-determined Hilbert decomposition of $M$ if and only if it satisfies the following equalities:
\begin{align}\label{eq:hilb}
	\sum_{\bb \in [\nb,\ab]} \sum_{\substack{Z \subseteq [n] \\ \supp(\ab-\bb) \subseteq Z}} u(\bb,Z) &= \dim_\KK M_\ab & \text{ for } \ab \in [\nb,\gb].
\end{align}
\end{propo}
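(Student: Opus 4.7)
My plan is to interpret the equation in \eqref{eq:hilb} as the identity of Hilbert series of $M$ and of the module $\bigoplus_{\omega \in \Omega} \KK[Z_\omega](-\bb_\omega)^{u(\omega)}$, restricted to multidegrees in $[\nb,\gb]$. The forward direction is essentially bookkeeping, while the backward direction requires the extra step of extending the equality from $[\nb,\gb]$ to all of $\NN^n$.

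First, I would record the elementary computation that for $(\KK[Z],\bb)\in\Omega$ and $\ab\in\NN^n$,
\[
\dim_\KK (\KK[Z](-\bb))_\ab = \begin{cases} 1 & \text{if } \bb \preceq \ab \text{ and } \supp(\ab-\bb) \subseteq Z, \\ 0 & \text{otherwise.} \end{cases}
\]
So the sum on the left hand side of \eqref{eq:hilb} is exactly the dimension, in multidegree $\ab$, of the candidate module $N_u := \bigoplus_{\omega\in\Omega} \KK[Z_\omega](-\bb_\omega)^{u(\omega)}$. Thus \eqref{eq:hilb} for $\ab\in[\nb,\gb]$ is the same as $\dim_\KK (N_u)_\ab = \dim_\KK M_\ab$ for such $\ab$. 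For the forward implication, I just note that a $\gb$-determined Hilbert decomposition of $M$ is by definition an isomorphism $M \iso N_u$ as multigraded $\KK$-vector spaces, and restricting this identity of Hilbert series to $[\nb,\gb]$ yields \eqref{eq:hilb}.

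For the backward implication, I must promote the equalities in $[\nb,\gb]$ to an isomorphism $M \iso N_u$ of multigraded $\KK$-vector spaces, and then verify that the decomposition is $\gb$-determined. The latter is immediate from the very definition of $\Omega$, which incorporates the characterization (2) of Proposition \ref{lem:gstandart}. For the former, the nontrivial point is to show that the equality $\dim_\KK (N_u)_\ab = \dim_\KK M_\ab$ holds for \emph{all} $\ab \in \NN^n$, not merely $\ab \in [\nb,\gb]$. Since $M$ is positively $\gb$-determined, one has $\dim_\KK M_\ab = \dim_\KK M_{\ab\wedge\gb}$, so it suffices to prove the companion identity $\dim_\KK (N_u)_\ab = \dim_\KK (N_u)_{\ab\wedge\gb}$.

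The main obstacle — and in fact the only real content — is this last reduction. The argument is as follows. Since $u$ is supported on $\Omega$, any contributing $\bb$ satisfies $\bb\preceq\gb$, hence $\bb\preceq\ab$ if and only if $\bb\preceq\ab\wedge\gb$. Comparing supports, $\supp(\ab-\bb)$ and $\supp(\ab\wedge\gb-\bb)$ can differ only at indices $j$ with $a_j > g_j = b_j$; but for any $(\KK[Z],\bb) \in \Omega$ such indices lie in $\set{k:g_k=b_k}\subseteq Z$ automatically, so the condition $\supp(\ab-\bb)\subseteq Z$ is equivalent to $\supp(\ab\wedge\gb-\bb)\subseteq Z$. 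Hence the two sums defining $\dim_\KK (N_u)_\ab$ and $\dim_\KK (N_u)_{\ab\wedge\gb}$ match term by term, which closes the argument.
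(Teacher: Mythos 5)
Your argument is correct, and it follows the same route the paper leaves implicit: the paper's remark that $(\KK[Z](-\bb))_\ab \neq 0$ iff $\bb \preceq \ab$ and $\supp(\ab-\bb)\subseteq Z$ is exactly your opening computation, and \eqref{eq:hilb} is then the equality of Hilbert functions on $[\nb,\gb]$. Your extension step from $[\nb,\gb]$ to all of $\NN^n$, using that $\set{j \with g_j = b_j}\subseteq Z$ for every $(\KK[Z],\bb)\in\Omega$, is a correct and welcome filling-in of the detail the paper treats as obvious (it mirrors the (2)$\Rightarrow$(3) argument of Proposition~\ref{lem:gstandart}).
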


So, the set of $\gb$-determined Hilbert decompositions corresponds naturally to the set of $\ZZ^n$-lattice points in the polytope $\mathscr{H}$ of non-negative solutions to \eqref{eq:hilb}.
The set of \emph{$\gb$-determined Stanley decompositions} is a subset of this.
By the following result, this subset may be defined by linear inequalities as well, i.e. the $\gb$-determined Hilbert decomposition of $M$ which are induced by $\gb$-determined Stanley decompositions correspond to the $\ZZ^n$-lattice points in a certain polytope $\mathscr{P}$.
This is the main result of this subsection.

\begin{theo}\label{thm:stanleypolytop}
A vector $u \in \NN^\Omega$ corresponds to a $\gb$-determined Hilbert decomposition of $M$ which is induced by a $\gb$-determined Stanley decomposition, if and only if it satisfies both \eqref{eq:hilb} and in addition the following inequalities:
	\begin{align}\label{eq:stan}
	\sum_{\bb \in J} \sum_{\substack{Z \subseteq [n] \\ \supp(\ab-\bb) \subseteq Z}} u(\bb,Z) &\leq  \dim_\KK \sum_{\bb \in J} \Xb^{\ab-\bb} M_{\bb} & \text{ for } \ab \in [\nb,\gb], J \subseteq [\nb,\ab].
	\end{align}
Here, the sum on the right-hand side is a sum of vector spaces.
\end{theo}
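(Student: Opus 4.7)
The plan is to reduce the statement to a transversal problem for families of vector subspaces of each $M_\ab$ and then apply the classical theorem of Rado on linearly independent transversals. The forward implication follows directly from the definitions, while the reverse implication relies on Corollary~\ref{coro:localglobal} together with Rado's theorem.

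For the forward direction, assume that $u$ corresponds to a Hilbert decomposition $(\KK[Z_i], \bb_i)_{i \in I}$ that is induced by a Stanley decomposition $(\KK[Z_i], m_i)_{i \in I}$. Fix $\ab \in [\nb,\gb]$ and $J \subseteq [\nb,\ab]$. The left-hand side of \eqref{eq:stan} is precisely the cardinality of $\set{i \in \cC(\ab) \with \bb_i \in J}$. Each such $i$ contributes an element $\Xb^{\ab - \bb_i} m_i \in \Xb^{\ab - \bb_i} M_{\bb_i} \subseteq \sum_{\bb \in J} \Xb^{\ab - \bb} M_{\bb}$. Since the whole collection $\set{\Xb^{\ab - \bb_i} m_i \with i \in \cC(\ab)}$ is a basis of $M_\ab$, any subcollection is linearly independent, so its size is bounded by the dimension of any subspace containing it. This yields \eqref{eq:stan}.

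For the reverse direction, by Corollary~\ref{coro:localglobal} it suffices to show that for each $\ab \in [\nb,\gb]$ there exist linearly independent elements $(m_i)_{i \in \cC(\ab)}$ of $M_\ab$ with $m_i \in V_i := \Xb^{\ab - \bb_i} M_{\bb_i}$. This is exactly the question of whether the family of subspaces $\set{V_i \with i \in \cC(\ab)}$ of $M_\ab$ admits a system of linearly independent representatives, which by Rado's subspace transversal theorem holds if and only if $|J'| \leq \dim \sum_{i \in J'} V_i$ for every $J' \subseteq \cC(\ab)$. The key observation is that $V_i$ depends only on $\bb_i$; consequently, it suffices to check Rado's condition on \emph{saturated} subsets of the form $J' = \set{i \in \cC(\ab) \with \bb_i \in B}$ for $B \subseteq [\nb,\ab]$, since the right-hand side is unchanged as $J'$ is enlarged within such a block. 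For such a saturated $J'$ one has
\[ |J'| = \sum_{\bb \in B} \sum_{\substack{Z \subseteq [n] \\ \supp(\ab-\bb) \subseteq Z}} u(\bb,Z), \qquad \sum_{i \in J'} V_i = \sum_{\bb \in B} \Xb^{\ab-\bb} M_{\bb}, \]
so Rado's condition at $J'$ becomes exactly \eqref{eq:stan} with $J = B$. Combining \eqref{eq:hilb} (which gives the Hilbert decomposition part, via Proposition~\ref{prop:dioph}) with the Rado condition obtained from \eqref{eq:stan} finishes the proof.

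The only genuinely nontrivial step is identifying Rado's subspace transversal theorem as the appropriate combinatorial tool and observing that the repetition among the subspaces $V_i$ (encoded by the Hilbert decomposition having several summands of the same shift $\bb$) allows one to reindex Rado's condition by subsets $J \subseteq [\nb,\ab]$ rather than by subsets of $\cC(\ab)$. Once this reduction is in place, both implications are immediate from the formula $|\cC(\ab) \cap \set{i \with \bb_i \in J}| = \sum_{\bb \in J} \sum_{Z \supseteq \supp(\ab - \bb)} u(\bb,Z)$.
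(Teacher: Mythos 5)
Your proposal is correct and follows essentially the same route as the paper's proof: reduce via Corollary~\ref{coro:localglobal} to a degreewise independent-transversal problem for the subspaces $\Xb^{\ab-\bb}M_{\bb}$, apply Rado's theorem in its vector-space form (Corollary~\ref{cor:rado}), and observe that since the subspaces depend only on the shift $\bb$, the Rado conditions need only be checked on saturated index sets, which correspond bijectively to subsets $J \subseteq [\nb,\ab]$. The only cosmetic difference is that you verify the forward implication directly from the Stanley decomposition instead of through Corollary~\ref{coro:localglobal}, which amounts to the same argument.
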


\begin{rema}
	The system of inequalities \eqref{eq:stan} is rather large, so it does not seem to be feasible for the actual computation of the Stanley depth.
	However, the theorem shows that the set of all Stanley decomposition has a nice structure.
	Note that the integer solutions of \eqref{eq:stan} for a fixed $a \in [\nb, \gb]$ form a discrete polymatroid, cf.~Herzog and Hibi \cite{HH2}.
	So the set of $\gb$-determined Stanley decompositions may also be seen as an intersection of discrete polymatroids with the polytope $\mathscr{H}$.
\end{rema}

The proof uses Rado's theorem, which we recall for the reader's convenience.
Recall that a \emph{transversal} of a set system $A_1, \dotsc , A_r$ is a collection of pairwise different elements $a_1 \in A_1, a_2 \in A_2, \dotsc, a_r \in A_r$.
\begin{theo}[Rado's theorem, VIII.2.3 \cite{Aigner}]\label{thm:rado}
Let $M$ be a matroid on a ground set $B$ with rank function $r$ and let $\mathfrak{A}: A_1, \dotsc , A_r \subseteq B$ be a collection of subsets of $B$.
Then $\mathfrak{A}$ has an independent transversal if and only if
\[ |I| \leq r\left(\bigcup_{i \in I} A_i\right) \]
for every subset $I \subset [r]$.
\end{theo}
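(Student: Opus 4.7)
The plan is to prove Rado's theorem by induction on $N := \sum_{i=1}^r |A_i|$, after dispatching the necessity direction directly. For necessity, if $(a_1, \dotsc, a_r)$ is an independent transversal, then for any $I \subseteq [r]$ the set $\{a_i : i \in I\}$ has cardinality $|I|$, lies inside $\bigcup_{i \in I} A_i$, and is independent as a subset of an independent set; hence $r(\bigcup_{i \in I} A_i) \geq |I|$.

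For the sufficiency direction, I would induct on $N$. In the base case $N = r$ we have $|A_i| = 1$ for all $i$, so $A_i = \{a_i\}$ for some $a_i \in B$. Applying the Rado condition with $I = [r]$ gives $r(\{a_1,\dotsc,a_r\}) \geq r$, and since the rank of a set never exceeds its cardinality, the $a_i$ must be pairwise distinct and together form an independent set, which is precisely an independent transversal. For the inductive step, assume some $|A_j| \geq 2$; without loss of generality $j = 1$. Pick distinct $x, y \in A_1$ and define $A_1' := A_1 \setminus \{x\}$ and $A_1'' := A_1 \setminus \{y\}$. If either of the smaller families $(A_1', A_2, \dotsc, A_r)$ or $(A_1'', A_2, \dotsc, A_r)$ satisfies the Rado condition, the inductive hypothesis applies (their total cardinality is strictly less than $N$), and any independent transversal it produces is automatically one for the original family since $A_1', A_1'' \subseteq A_1$.

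The crux is to rule out the bad case in which \emph{neither} shrunken family satisfies the Rado condition. If that happened, there would exist $I', I'' \subseteq \{2, \dotsc, r\}$ with $r(A_1' \cup \bigcup_{i \in I'} A_i) \leq |I'|$ and $r(A_1'' \cup \bigcup_{i \in I''} A_i) \leq |I''|$. Writing $X := A_1' \cup \bigcup_{i \in I'} A_i$ and $Y := A_1'' \cup \bigcup_{i \in I''} A_i$, one checks that $X \cup Y = A_1 \cup \bigcup_{i \in I' \cup I''} A_i$ and $X \cap Y \supseteq \bigcup_{i \in I' \cap I''} A_i$. Invoking submodularity of the matroid rank together with the Rado condition for the original family applied to $\{1\} \cup I' \cup I''$ and to $I' \cap I''$, we get the chain $|I'| + |I''| \geq r(X) + r(Y) \geq r(X \cup Y) + r(X \cap Y) \geq (|I' \cup I''| + 1) + |I' \cap I''| = |I'| + |I''| + 1$, which is a contradiction.

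The main obstacle is recognizing that splitting $A_1$ into two \emph{overlapping} proper subsets (each omitting a single element) is exactly what allows submodularity to be exploited productively; shrinking $A_1$ in only one way would leave no room for the rank inequality argument. Once this splitting idea is in place, the rest is routine bookkeeping with rank inequalities and the monotonicity and submodularity of the matroid rank function.
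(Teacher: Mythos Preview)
The paper does not give its own proof of this theorem; it is quoted from Aigner's textbook and used as a black box, so there is nothing to compare against. Your argument is the classical submodularity proof and it is correct.

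One small step you leave implicit: when you assert that a violating subset for the shrunken family $(A_1', A_2, \dotsc, A_r)$ must be of the form $\{1\} \cup I'$ with $I' \subseteq \{2,\dotsc,r\}$, you are using that any subset $J$ not containing the index $1$ automatically satisfies the Rado condition in the shrunken family, since $\bigcup_{j \in J} A_j$ is unchanged. This is obvious but worth a half-sentence. With that remark, the chain
\[
|I'| + |I''| \;\geq\; r(X) + r(Y) \;\geq\; r(X \cup Y) + r(X \cap Y) \;\geq\; \bigl(|I' \cup I''| + 1\bigr) + |I' \cap I''|
\]
is fully justified by submodularity, monotonicity, and the original Rado condition applied to $\{1\} \cup I' \cup I''$ and to $I' \cap I''$, exactly as you say.
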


\newcommand{\Vc}{\mathcal{V}}
\noindent We use the following variant of Rado's theorem.
\begin{coro}\label{cor:rado}
Let $V$ be a vector space and $\Vc: V_1, \dotsc, V_s$ a collection of linear subspaces of $V$.
For $u \in \NN^s$, the following are equivalent:
\begin{enumerate}
	\item There exists an independent transversal of $\Vc$, i.e. a linearly independent family of vectors $v_1 \in V_1, v_2 \in V_2, \dotsc, v_s \in V_s$.
	\item For each subset $I \subseteq \set{1, \dots, s}$, the following inequality holds:
		\[ |I| \leq \dim_\KK \sum_{i \in I} V_i. \]
		Here, the sum on the right-hand side is a sum of vector spaces.
\end{enumerate}
\end{coro}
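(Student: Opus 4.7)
The plan is to deduce this corollary directly from Rado's theorem (Theorem \ref{thm:rado}) applied to the linear matroid on a suitable finite set of vectors.

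First I would verify the easy implication (1) $\Rightarrow$ (2): if $v_1 \in V_1, \dotsc, v_s \in V_s$ are linearly independent, then for any $I \subseteq \set{1, \dotsc, s}$ the subfamily $(v_i)_{i \in I}$ is a linearly independent set contained in $\sum_{i \in I} V_i$, whence $|I| \leq \dim_\KK \sum_{i \in I} V_i$.

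For the nontrivial direction (2) $\Rightarrow$ (1), the strategy is to reduce to the hypothesis of Rado's theorem. I would choose a finite spanning set $A_i \subseteq V_i$ for each $i$ (e.g. a basis if $V_i$ is finite dimensional; otherwise one may first replace $V_i$ by $V_i \cap W$ for a sufficiently large finite-dimensional subspace $W \subseteq V$), set $B := \bigcup_{i=1}^s A_i$, and equip $B$ with the linear matroid structure inherited from $V$. Its rank function $r$ satisfies
\[ r\Bigl(\bigcup_{i \in I} A_i\Bigr) \;=\; \dim_\KK \lin\Bigl(\bigcup_{i \in I} A_i\Bigr) \;=\; \dim_\KK \sum_{i \in I} V_i \]
for every $I \subseteq \set{1, \dotsc, s}$. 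Thus the inequalities in (2) translate verbatim into the hypothesis of Theorem \ref{thm:rado}, which then yields an independent transversal $v_1 \in A_1, \dotsc, v_s \in A_s$; since independence in the linear matroid means linear independence in $V$, and $v_i \in A_i \subseteq V_i$, this is precisely the desired family in (1).

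The only (mild) obstacle is the gap between the "pairwise distinct representatives" demanded by Rado's theorem and the "linearly independent family" asked for in (1). This dissolves immediately: a linearly independent collection cannot contain $0$ or repeat an element, so for our purposes the two notions coincide. Apart from this, the argument is a routine verification that the rank function of the linear matroid respects spans of unions.
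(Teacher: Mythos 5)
Your proposal is correct and follows essentially the same route as the paper: take bases (spanning sets) $A_i$ of the $V_i$, regard their union as a linear matroid, and apply Rado's theorem, noting that the rank of $\bigcup_{i\in I}A_i$ is $\dim_\KK\sum_{i\in I}V_i$. The remarks on the easy direction and on distinctness versus linear independence are fine and add nothing problematic.
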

\begin{proof}
	The inequality is clearly necessary, so we only need to show the sufficiency.
	Let $A_i$ be a basis for $V_i, 1 \leq i \leq s$.
	Consider the union $M := \bigcup_i A_i$ as a matroid.
	By Rado's theorem \ref{thm:rado}, $A_1, \dotsc, A_s$ has an independent transversal if and only if
	\[ |I| \leq \dim_\KK \lin\left(\bigcup_{i \in I} A_i\right) = \dim_\KK \sum_{i \in I} V_i \]
	for every subset $I \subset [s]$.
	Hence the inequality in our claim is sufficient.
\end{proof}

\begin{proof}[Proof of Theorem \ref{thm:stanleypolytop}]
	Assume that $u \in \NN^\Omega$ is indeed a $\gb$-determined Hilbert decomposition of the module $M$.
	By Corollary \ref{coro:localglobal}, the Hilbert decomposition $u$ corresponds to a Stanley decomposition
	if and only if for each $\ab \in [\nb,\gb]$, there are linearly independent elements $( m(\bb,Z,i) )_{(\bb,Z,i) \in \Lambda}$, such that $m(\bb,Z,i) \in \Xb^{\ab-\bb} M_\bb$ for all $(\bb,Z,i) \in \Lambda$, where
	\[ \Lambda := \Lambda(\ab,u) := \set{(\bb,Z,i) \with \bb \in [\nb,\ab], Z \subset [n], \supp(\ab-\bb) \subset Z, 1\leq i \leq u(\bb,Z)}. \]
	So in particular, the inequality in our claim is necessary.
	
	We apply the preceding Corollary \ref{cor:rado} to the vector space $M_\ab$ and the collection $(\Xb^{\ab-\bb} M_\bb)_{(\bb,Z,i) \in \Lambda}$ of subspaces.
	For a subset $I \subset \Lambda$, consider
	\[ \bar{I} := \set{(\bb,Z,i) \in \Lambda \with (\bb,Z', i') \in I \text{ for some } Z', i'}. \]
	It clearly holds that
	\[ \sum_{(\bb,Z,i) \in I} \Xb^{\ab-\bb} M_\bb = \sum_{(\bb,Z,i) \in \bar{I}} \Xb^{\ab-\bb} M_\bb, \]
	hence it suffices to consider subsets of the form $\bar{I}$, and these are in bijection with subsets $J \subseteq [\nb,\ab]$.
	Hence our inequalities are also sufficient.
\end{proof}

\section*{Acknowledgements}
The authors are greatly indebted to J\"urgen  Herzog for bringing Question \ref{Q:Herzog63} to our attention.

\bibliographystyle{alpha}
\bibliography{LCM2}

\end{document}